\newtheorem{theorem}{Theorem}[section]
\newtheorem{lemma}[theorem]{Lemma}
\newtheorem{proposition}[theorem]{Proposition}
\newtheorem{corollary}[theorem]{Corollary}
\newtheorem{claim}[theorem]{Claim}
\theoremstyle{definition}
\newtheorem{definition}[theorem]{Definition}
\newtheorem{conjecture}[theorem]{Conjecture}
\newcommand{\Aut}{\mathrm{Aut\mkern 2mu}}
\newcommand{\M}{\mathrm{M\mkern 1mu}}
\newcommand{\A}{\mathcal A}
\newcommand{\IN}{\mathbb N}
\title[Automorphism groups of  superextensions of finite monogenic semigroups]{Automorphism groups of  superextensions\\ of finite monogenic semigroups}
\author{Taras Banakh and Volodymyr Gavrylkiv}
\address[T.~Banakh]{Ivan Franko National University of Lviv (Ukraine) and Jan Kochanowski University in Kielce (Poland)} \email{t.o.banakh@gmail.com}
\address[V.~Gavrylkiv]{Vasyl Stefanyk Precarpathian National University,
Ivano-Frankivsk, Ukraine} \email{vgavrylkiv@gmail.com}
\subjclass{20D45, 20M15, 20B25}
\keywords{monogenic semigroup, maximal linked upfamily, superextension, automorphism group}
\begin{document}

\begin{abstract}
A family $\mathcal L$ of
subsets  of a set $X$ is called {\em linked} if $A\cap
B\ne\emptyset$ for any $A,B\in\mathcal L$.  A linked family
$\mathcal M$ of subsets of $X$ is  {\em maximal linked} if
$\mathcal M$ coincides with each linked family $\mathcal L$ on
$X$ that contains $\mathcal M$. The {\em  superextension}
$\lambda(X)$ of $X$ consists of all maximal linked families on $X$. Any
associative binary operation $* : X\times X \to X$ can be extended
to an associative binary operation $*:
\lambda(X)\times\lambda(X)\to\lambda(X)$. In the
paper we study automorphisms of the superextensions of  finite
monogenic semigroups and characteristic ideals in such semigroups. In particular, we describe the automorphism groups of the superextensions of
finite monogenic semigroups of cardinality $\leq 5$.
\end{abstract}
\maketitle

\section{Introduction}

In this paper we investigate the automorphism group of the
superextension $\lambda(S)$ of a  finite monogenic semigroup $S$. The
thorough study of various extensions of semigroups was started in
\cite{G2} and continued in \cite{BG2}--\cite{BGN} and \cite{G4}--\cite{G8}.
The largest among these extensions is the semigroup $\upsilon(S)$ of all
upfamilies on $S$. A family $\mathcal{A}$ of non-empty subsets of
a set $X$ is called an {\em upfamily} if for each set
$A\in\mathcal{A}$ any subset $B\supset A$ of $X$ belongs to
$\mathcal{A}$. Each family $\mathcal{B}$ of non-empty subsets of
$X$ generates the upfamily $\langle B \subset X :
B\in\mathcal{B}\rangle = \{ A \subset X : \exists B \in\mathcal{B}\
( B \subset A )\}$. An upfamily $\mathcal{F}$  that is closed
under taking  finite intersections is called a {\em filter}. A
filter $\mathcal{U}$ is called an {\em ultrafilter} if
$\mathcal{U} = \mathcal{F}$ for any filter $\mathcal{F}$
containing $\mathcal{U}$. The family $\beta(X)$ of all
ultrafilters on a set $X$ is called {\em the Stone-\v Cech
extension of $X$}, see \cite{HS}. An ultrafilter, generated by a singleton $\{ x \}$, $x\in X$, is called
{\em principal}. Each point $x\in X$ is identified with the
principal ultrafilter $\langle\{ x \}\rangle$ generated by the
singleton $\{ x \}$, and hence we can consider
$X\subset\beta(X)\subset\upsilon(X)$. It was shown in \cite{G2}
that any associative binary operation $* : S\times S \to S$ can be
extended to an associative binary operation $*:
\upsilon(S)\times\upsilon(S)\to\upsilon(S)$ by the formula
\begin{equation*}
\mathcal A*\mathcal B=\big\langle\bigcup_{a\in A}a*B_a:A\in\mathcal A,\;\;\{B_a\}_{a\in A}\subset\mathcal B\big\rangle
\end{equation*}
for upfamilies $\mathcal{A}, \mathcal{B}\in\upsilon(S)$. In this
case the Stone-\v Cech compactification $\beta(S)$ is a
subsemigroup of the semigroup $\upsilon(S)$.

The semigroup $\upsilon(S)$ contains as subsemigroups many other important
extensions of $S$. In particular, it contains the semigroup
$\lambda(S)$ of maximal linked upfamilies, see \cite{G1}, \cite{G2}.
An upfamily $\mathcal L$ of
subsets  of $S$ is said to be {\em linked} if $A\cap
B\ne\emptyset$ for all $A,B\in\mathcal L$.  A linked upfamily
$\mathcal M$ of subsets of $S$ is  {\em maximal linked} if
$\mathcal M$ coincides with each linked upfamily $\mathcal L$ on
$S$ that contains $\mathcal M$. It follows that $\beta(S)$ is a subsemigroup of $\lambda(S)$.
The space $\lambda(S)$
is well-known in  General and Categorial Topology as the {\em
superextension} of $S$, see \cite{vM}--\cite{Ve}.

For a finite set $X$, the cardinality of the set $\lambda(X)$ grows very quickly as $|X|$ tends to infinity.
The calculation of the cardinality of $\lambda(X)$ seems to be a
difficult combinatorial problem, which can be reformulated as the problem of counting the number $\lambda(n)$ of  self-dual monotone Boolean functions of $n$ variables, which is well-known in Discrete Mathematics.
According to Proposition 1.1 in \cite{BMMV}, $$\log_2\lambda(n)=\frac{2^n}{\sqrt{2\pi n}}+o(1),$$ which means that the sequence $(\lambda(n))_{n=1}^\infty$ has double exponential growth.
The sequence of numbers  $\lambda(n)$ (known in Discrete Mathematics as Ho\c sten-Morris numbers) is included in the {\tt On-line Encyclopedia of Integer Sequences} as the sequence A001206. All known precise values of this sequence (taken from \cite{BMMV}) are presented the following table.

\begin{center}
\begin{tabular}{|c|ccccccccc|}
\hline
$|X|$ & 1 & 2 & 3 & 4 & 5 & 6 & 7 & 8 &9\\
\hline
$|\lambda(X)|$ & 1 & 2 & 4 & 12 & 81 & 2646 & 1422564 &229809982112&423295099074735261880\\
\hline
\end{tabular}
\end{center}
\bigskip

 Each map $f:X\to Y$ induces the map
$$\lambda f:\lambda (X)\to \lambda (Y),\quad \lambda f:\mathcal L\mapsto
\big\langle f(L)\subset Y: L\in\mathcal L\big\rangle, \ \text{ see \cite{G1}}.$$

If $\varphi: S\to S'$ is a homomorphism of semigroups, then $\lambda \varphi:
\lambda(S)\to \lambda(S')$ is a homomorphism as well, see \cite{G4}.

A non-empty subset $I$ of a semigroup $S$ is called an {\em  ideal}
if $IS\cup SI\subset I$. An ideal $I$ of a semigroup $S$ is called  {\em proper} if $I\neq S$.
A proper ideal $M$ of $S$ is  {\em maximal} if $M$ coincides with each proper ideal $I$ of $S$ that contains $M$.
It is easy to see that for every $n\in \IN$ the subset $S^{\cdot n}=\{x_1\cdot\ldots\cdot x_n:x_1,\dots,x_n\in S\}$ is an ideal in $S$.

An element $z$ of a semigroup $S$ is called a {\em zero} (resp. a {\em  left zero}, a {\em right
zero}) in $S$ if $az=za=z$ (resp. $za=z$, $az=z$) for any $a\in
S$. An element $e$ of a semigroup $S$ is called an {\em idempotent} if $ee=e$. By $E(S)$ we denote the
set of all idempotents of a semigroup $S$.

Recall that an {\em isomorphism} between $S$ and $S'$ is a bijective function
$\psi:S\to S'$ such that $\psi(xy)=\psi(x)\psi(y)$ for
all $x,y\in S$. If there exists an isomorphism between $S$ and
$S'$, then $S$ and $S'$ are said to be {\em isomorphic}, denoted
$S\cong S'$. An isomorphism $\psi:S\to S$ is called an {\em automorphism} of a semigroup $S$. By $\Aut(S)$ we denote
the automorphism group of a semigroup $S$.

For an automorphism $\psi$ of a semigroup $S$, a subset $A\subset S$ is called {\em $\psi$-invariant} if $\psi(A)=A$. A subset $A\subset S$ is called {\em characteristic} if $\psi(A)=A$ for any automorphism $\psi$ of $S$. It is easy to see that the set $E(S)$ is characteristic in $S$ and so are the ideals $S^{{\cdot} n}$ for all $n\in\IN$.

For a set $X$ by $S_X$ we denote the group of all bijections of $X$. For two sets $X\subset Y$ we shall identify $S_X$ with the subgroup $\{\varphi\in S_Y:\varphi|Y\setminus X=\mathrm{id}\}$ of the permutation group $S_Y$. By $C_n$ we denote a cyclic group of order $n\in\IN$.

In this paper we study automorphisms of superextensions of finite monogenic semigroups.
The thorough study of automorphism groups of  superextensions of semigroups was started in
\cite{G9} and continued in \cite{BG9}.
In \cite{G9} it was shown that each automorphism of a semigroup $S$ can be extended to an automorphism of its superextension $\lambda(S)$
and the automorphism group $\Aut(\lambda(S))$ of the superextension of a semigroup $S$ contains a subgroup,
isomorphic to the auto\-morphism group $\Aut(S)$ of $S$. Also  the automorphism groups of  superextensions
of null semigroups, almost null semigroups, right zero semigroups,  left zero semigroups and all  three-element semigroups were  described.
In \cite{BG9} we studied the automorphism groups  of super\-exten\-sions of groups and described  the automorphism groups $\Aut(\lambda(G))$
of the super\-exten\-sions of all groups $G$ of cardinality $|G|\leq 5$.
The obtained results  are presented in Table~\ref{tab:dedekind_gr}.

\begin{table}[ht]
\centering
\begin{tabular}{|r|cccccc|}
\hline
$G\cong$ & $C_1$ & $C_2$ & $C_3$ & $C_4$ & $C_2\times C_2$&$C_5$\\
\hline
$\Aut(G)\cong$ & $C_1$& $C_1$ & $C_2$ & $C_2$ & $S_3$ &$C_4$\\
\hline
$\Aut(\lambda(G))\cong$ & $C_1$ & $C_1$ & $C_2$ & $C_2\times C_2$ & $S_4$ &$C_4$\\
\hline
\end{tabular}
\smallskip
\caption{The automorphism groups of superextensions of groups of cardinality $\le 5$.}\label{tab:dedekind_gr}
\end{table}
\vskip-18pt

In Section~\ref{monsem} we establish some general results on the structure of superextensions of finite monogenic groups, using the notion of a good shift introduced in Section~\ref{s:good}. In particular, in Theorem~\ref{t3.2} we prove that two monogenic groups are isomorphic if and only if their superextensions are isomorphic. In Section~\ref{s4} we use the results of Section~\ref{monsem} to classify the authomorphism groups of the superextensions of monogenic semigroups of order $\le 5$. The obtained results are summed up in the table in Section~\ref{s:table}.

\section{Semigroups possessing a good shift}\label{s:good}

In this section we develop a tool for recognizing the automorphism group of a semigroup possessing a good shift.

\begin{definition}\label{d:good} Let $X$ be a semigroup. A function $\sigma:X\to X$ is called a {\em good shift} of $X$ if
$X\cdot X\subset \sigma(X)$ and for any elements $x,y\in X$ with $\sigma(x)=\sigma(y)$ the equalities $xz=yz$ and $zx=zy$ hold for all $z\in X$.

A good shift $\sigma:S\to S$ is called an {\em auto-shift} if $\sigma\circ\psi=\psi\circ\sigma$ for any automorphism $\psi\in\Aut(S)$.

\end{definition}

\begin{proposition}\label{p:kernel} If $\sigma:X\to X$ is a good shift of a semigroup $X$, then
the automorphism group $\Aut(X)$ of $X$ contains the subgroup $K$ consisting of all bijections $\psi:X\to X$ of $X$ such that  $\sigma\circ\psi=\sigma$ and $\psi|XX=\mathrm{id}$. This subgroup is isomorphic to $\prod_{x\in \sigma(X)}S_{\sigma^{-1}(x)\setminus XX}$.
\end{proposition}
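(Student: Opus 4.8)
The plan is to prove the statement in three stages: first that every $\psi\in K$ is genuinely an automorphism of $X$, then that $K$ is closed under composition and inversion (so is a subgroup of $\Aut(X)$), and finally to identify $K$ with the displayed product of symmetric groups. The essential content lies in the first stage, where the defining properties of a good shift are invoked.

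For that first stage, I would fix $\psi\in K$ and arbitrary $a,b\in X$ and check $\psi(ab)=\psi(a)\psi(b)$. Since $ab\in XX$ and $\psi$ fixes $XX$ pointwise, $\psi(ab)=ab$, so it remains to prove $\psi(a)\psi(b)=ab$. Because $\sigma\circ\psi=\sigma$, we have $\sigma(\psi(a))=\sigma(a)$ and $\sigma(\psi(b))=\sigma(b)$, so Definition~\ref{d:good} gives $\psi(a)z=az$ and $z\psi(b)=zb$ for all $z\in X$. Substituting $z=\psi(b)$ into the first identity and $z=a$ into the second yields $\psi(a)\psi(b)=a\psi(b)=ab=\psi(ab)$. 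Thus $\psi$ is multiplicative, and being a bijection it is an automorphism of $X$.

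Next I would verify that $K$ is a subgroup. The identity clearly lies in $K$. For $\psi,\varphi\in K$ we get $\sigma\circ(\psi\circ\varphi)=(\sigma\circ\psi)\circ\varphi=\sigma\circ\varphi=\sigma$, and $\psi\circ\varphi$ fixes $XX$ pointwise, so $\psi\circ\varphi\in K$. For inverses, substituting $\psi^{-1}(x)$ for $x$ in $\sigma\circ\psi=\sigma$ gives $\sigma(x)=\sigma(\psi^{-1}(x))$, i.e.\ $\sigma\circ\psi^{-1}=\sigma$; and since $\psi$ fixes each point of $XX$, so does $\psi^{-1}$. Hence $\psi^{-1}\in K$ and $K\le\Aut(X)$. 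For the isomorphism, I would observe that $\sigma\circ\psi=\sigma$ forces each $\psi\in K$ to send every fibre $\sigma^{-1}(x)$ into itself; since $\psi$ is a bijection of $X$ and any preimage of a point of $\sigma^{-1}(x)$ lies again in $\sigma^{-1}(x)$, it restricts to a bijection of the fibre, and as it fixes $\sigma^{-1}(x)\cap XX$ pointwise it restricts to a permutation of $\sigma^{-1}(x)\setminus XX$. This defines a map $\Phi\colon K\to\prod_{x\in\sigma(X)}S_{\sigma^{-1}(x)\setminus XX}$ by $\Phi(\psi)=\big(\psi|_{\sigma^{-1}(x)\setminus XX}\big)_{x\in\sigma(X)}$, a homomorphism because composition is computed fibrewise. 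It is injective since, as the sets $\sigma^{-1}(x)$ partition $X$ and $\psi$ is the identity on $XX$, an element of $K$ is determined by these restrictions; it is surjective because any tuple of fibrewise permutations, extended by the identity on $XX$, assembles into a bijection of $X$ lying in $K$. Hence $\Phi$ is the desired isomorphism.

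I expect the only delicate point to be the multiplicativity in the first stage, where the two clauses $\psi(a)z=az$ and $z\psi(b)=zb$ of the good shift condition must be applied to two different arguments and combined in the correct order; the subgroup verification and the fibrewise counting are routine, the latter depending only on the fact that the fibres $\sigma^{-1}(x)$, $x\in\sigma(X)$, partition $X$.
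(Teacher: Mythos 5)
Your proposal is correct and follows essentially the same route as the paper: the paper's proof consists precisely of the multiplicativity computation $\psi(x)\cdot\psi(y)=x\cdot\psi(y)=xy=\psi(xy)$ via the two clauses of Definition~\ref{d:good}, and dismisses the subgroup structure and the fibrewise identification with $\prod_{x\in\sigma(X)}S_{\sigma^{-1}(x)\setminus XX}$ as clear. You merely spell out those routine verifications in more detail.
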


\begin{proof} It is clear that the group $K$ is isomorphic to the product
of permutation groups $\prod_{x\in \sigma(X)}S_{\sigma^{-1}(x)\setminus XX}$.

To see that $K\subset\Aut(X)$, it is necessary to check that each bijection $\psi\in K$ is an  automorphism of the semigroup $X$.
Given two elements $x,y\in X$, we shall show that $\psi(xy)=\psi(x)\cdot\psi(y)$. It follows that $xy\in XX$ and hence $\psi(xy)=xy$. Since $\sigma\circ\psi=\sigma$, we obtain that $\sigma(\psi(x))=\sigma(x)$ and $\sigma(\psi(y))=\sigma(y)$. Now Definition~\ref{d:good} ensures that
$$\psi(x)\cdot\psi(y)=x\cdot\psi(y)=xy=\psi(xy),$$
so $\psi\in\Aut(X)$.
\end{proof}

A homomorphism $\rho:S\to S$ of a semigroup $S$ is called a {\em homomorphic retraction} if $\rho\circ\rho=\rho$. The functoriality of the superextension in the category of semigroups \cite{G4} ensures that for any homomorphic retraction $\rho:S\to S$ the map $\lambda\rho:\lambda(S)\to\lambda(S)$ is a homomorphic retraction, too.

For a semigroup $X$ by $\Xi_X$ denote the family of all characteristic subsets of $X$.

\begin{theorem}\label{t:Ker} If $\sigma:X\to X$ is an auto-shift of a semigroup $X$, then $\sigma(X)$ is a characteristic ideal of $X$, and for the restriction operator $$R:\Aut(X)\to\Aut(\sigma(X)),\;\; R:\psi\mapsto\psi|\sigma(X),$$
the kernel of $R$ is equal to $\prod_{x\in\sigma(X)}S_{\sigma^{-1}(x)\setminus\sigma(X)}$ and $R(\Aut(X))\subset H\subset G$ where  $$G=\big\{\varphi\in\Aut(\sigma(X)):\forall x\in\sigma(X)\;\;\varphi(\sigma^{-1}(x)\cap\sigma(X))=\sigma^{-1}(\varphi(x))\cap\sigma(X)\mbox{ \ and \ } |\sigma^{-1}(x)\setminus\sigma(X)|=|\sigma^{-1}(\varphi(x))\setminus\sigma(X)|\big\}$$
and
$$H:=\big\{\varphi\in G:\forall C\in\Xi_X\;\forall x\in\sigma(X)\;\;|\sigma^{-1}(x)\cap C|=|\sigma^{-1}(\varphi(x))\cap C|\big\}.$$If the auto-shift $\sigma$ is a homomorphic retraction, then $R(\Aut(X))=H=G$.
\end{theorem}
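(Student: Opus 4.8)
The plan is to establish the four assertions in the order stated, drawing throughout on the good-shift and auto-shift axioms of Definition~\ref{d:good}. First, to see that $\sigma(X)$ is a characteristic ideal, I would note that the inclusion $X\cdot X\subseteq\sigma(X)$ forces $\sigma(X)\cdot X\cup X\cdot\sigma(X)\subseteq X\cdot X\subseteq\sigma(X)$, so $\sigma(X)$ is an ideal (nonempty as soon as $X$ is); and since $\sigma$ commutes with every automorphism $\psi$, the equality $\psi(\sigma(X))=\sigma(\psi(X))=\sigma(X)$ shows that $\sigma(X)$ is characteristic. This characteristicity makes $\sigma(X)$ invariant under each $\psi\in\Aut(X)$, so $R:\psi\mapsto\psi|\sigma(X)$ is a well-defined group homomorphism into $\Aut(\sigma(X))$.

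For the kernel I would argue exactly as in Proposition~\ref{p:kernel}, with $\sigma(X)$ in place of $XX$. If $\psi\in\ker R$ then $\psi$ fixes $\sigma(X)$ pointwise; combined with the auto-shift identity $\sigma\circ\psi=\psi\circ\sigma$ this yields $\sigma\circ\psi=\sigma$, so $\psi$ permutes each fibre $\sigma^{-1}(x)$, fixing the points of $\sigma^{-1}(x)\cap\sigma(X)$ and permuting $\sigma^{-1}(x)\setminus\sigma(X)$ arbitrarily. Conversely, the good-shift property guarantees that any such fibre-permutation is an automorphism (the verification is the multiplicativity computation already carried out in Proposition~\ref{p:kernel}), whence $\ker R\cong\prod_{x\in\sigma(X)}S_{\sigma^{-1}(x)\setminus\sigma(X)}$.

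The inclusion $H\subseteq G$ holds by definition, so the remaining part of the general claim is $R(\Aut(X))\subseteq H$. For $\psi\in\Aut(X)$ with $\varphi=R(\psi)$, the commutation $\sigma\circ\psi=\psi\circ\sigma$ gives $\psi(\sigma^{-1}(x))=\sigma^{-1}(\psi(x))$; intersecting this with the $\psi$-invariant sets $\sigma(X)$ and $X\setminus\sigma(X)$ yields the two conditions defining $G$, while intersecting with an arbitrary characteristic set $C\in\Xi_X$ (also $\psi$-invariant) yields the cardinality condition defining $H$. Because $\psi$ is a bijection, each of these equalities of images passes to an equality of cardinalities.

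Finally, assume $\sigma$ is a homomorphic retraction. Then $\sigma|\sigma(X)=\mathrm{id}$, so each fibre meets $\sigma(X)$ in the single point $x$, and $G$ reduces to the set of $\varphi\in\Aut(\sigma(X))$ with $|\sigma^{-1}(x)|=|\sigma^{-1}(\varphi(x))|$ for all $x\in\sigma(X)$. Given such a $\varphi$, I would lift it by setting $\psi|\sigma(X)=\varphi$ and choosing, for each $x$, an arbitrary bijection $\sigma^{-1}(x)\setminus\{x\}\to\sigma^{-1}(\varphi(x))\setminus\{\varphi(x)\}$; the resulting bijection $\psi$ satisfies $\sigma\circ\psi=\psi\circ\sigma$ by construction. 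The crucial point, and the step I expect to be the main obstacle, is checking that $\psi$ is multiplicative. Here I would exploit the identity $ab=\sigma(a)\sigma(b)$, valid because $\sigma$ is a homomorphism fixing $ab\in\sigma(X)$: it reduces both $\psi(ab)$ and $\psi(a)\psi(b)$ to $\varphi(\sigma(a))\varphi(\sigma(b))$, where multiplicativity of $\varphi$ on $\sigma(X)$ closes the computation. This produces $\psi\in\Aut(X)$ with $R(\psi)=\varphi$, giving $G\subseteq R(\Aut(X))$; together with the already established chain $R(\Aut(X))\subseteq H\subseteq G$ it forces $R(\Aut(X))=H=G$.
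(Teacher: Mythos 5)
Your proposal is correct and follows essentially the same route as the paper: the same characteristicity argument, the same identification of the kernel via Proposition~\ref{p:kernel}, the same use of $\psi(\sigma^{-1}(x))=\sigma^{-1}(\psi(x))$ intersected with characteristic sets, and the same multiplicativity check $\psi(ab)=\varphi(\sigma(a))\varphi(\sigma(b))=\psi(a)\psi(b)$ for the lifted automorphism. The only (harmless) deviation is your observation that in the retraction case each fibre meets $\sigma(X)$ in a single point, which trivializes the first condition defining $G$; the paper instead carries that condition through the construction of $\bar\varphi$.
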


\begin{proof} Definition~\ref{d:good} ensures that $XX\subset \sigma(X)$, so $\sigma(X)$ is an ideal in $X$. Since $\sigma$ is an auto-shift, for any automorphism $\psi\in\Aut(X)$ we have $\psi\circ\sigma=\sigma\circ\psi$ and hence
$$\psi(\sigma(X))=\psi\circ\sigma(X)=\sigma\circ \psi(X)=
\sigma(X),$$
which means that the ideal $\sigma(X)$ is characteristic in $X$.

Now consider the group $K$ of all bijections $\psi:X\to X$ such that $\sigma\circ\psi=\sigma$ and $\psi|\sigma(X)$ is the identity map of $\sigma(X)$.
It is clear that the group $K$ is isomorphic to the product of permutation groups  $\prod_{x\in \sigma(x)}S_{\sigma^{-1}(x)\setminus\sigma(X)}$.
By Proposition~\ref{p:kernel}, $K\subset\Aut(X)$.

It is clear that $K$ is contained in the kernel $\mathrm{Ker}(R)$ of the restriction operator $R:\Aut(X)\to\Aut(\sigma(X))$. On the other hand, for any automorphism $\psi\in\mathrm{Ker}(R)$ we get $\sigma\circ\psi=\psi\circ\sigma=\sigma$, so $\psi\in K$.

Observe that for any automorphism $\psi$ of $X$ the equality $\psi^{-1}\circ\sigma=\sigma\circ\psi^{-1}$ implies that
$\sigma^{-1}(y)=\sigma^{-1}(\psi(y))$ for any $y\in\sigma(X)$.
Moreover, for any characteristic set $C\in\Xi_X$ we get $\psi(C)=C$ and hence
$$\psi(\sigma^{-1}(x)\cap C)=\psi(\sigma^{-1}(x))\cap \psi(C)=\sigma^{-1}(\psi(x))\cap C$$ and hence  $|\sigma^{-1}(x)\cap C|=|\sigma^{-1}(\psi(x))\cap C|$. Since $\sigma(X)\in\Xi_X$, for the restriction $\varphi:=\psi|\sigma(X)$ of $\psi$ we get $\varphi(\sigma^{-1}(x)\cap\sigma(X))=\sigma^{-1}(x)\cap\sigma(X)$ for every $x\in\sigma(X)$, which means that $\varphi=R(\psi)\in H$ and hence $R(\Aut(X))\subset H$.

The inclusion $H\subset G$ follows from $X\setminus \sigma(X)\in\Xi_X$.

Assuming that $\sigma$ is a homomorphic retraction, we shall prove that $R(\Aut(X))=H=G$.
It suffices to check that each automorphism $\varphi\in G$ extends to an automorphism of the semigroup $X$.
By the definition of $G$, we can extend the automorphism $\varphi$ of $\sigma(X)$ to a bijection $\bar\varphi$
of $X$ such that $\bar\varphi(\sigma^{-1}(x)\setminus\sigma(X))=\sigma^{-1}(\varphi(x))\setminus\sigma(X)$ for all $x\in\sigma(X)$.
By $\varphi\in G$, we also get $\varphi(\sigma^{-1}(x))\cap\sigma(X)=\sigma^{-1}(\varphi(x))\cap\sigma(X)$. So, $\bar\varphi(\sigma^{-1}(x))=\sigma^{-1}(\varphi(x))$ for any $x\in\sigma(X)$.
This equality implies the equality $\sigma\circ\bar\varphi=\varphi\circ\sigma$. Indeed, for any $z\in X$ and $x=\sigma(z)$ we get $\bar\varphi(z)\in\bar\varphi(\sigma^{-1}(x))=\sigma^{-1}(\varphi(x))$. Applying to this equality the map $\sigma$, we obtain $\sigma\circ\bar\varphi(z)\in\{\varphi(x)\}$ and hence $\sigma\circ\bar\varphi(z)=\varphi(x)=\varphi\circ\sigma(z)$.

Let us show that the bijection $\bar\varphi$ is an automorphism of $X$. Given any elements $x,y\in X$, it suffices to check that $\bar\varphi(xy)=\bar\varphi(x)\cdot\bar\varphi(y)$.
Taking into account that $xy\in XX\subset\sigma(X)$ and $\sigma:X\to\sigma(X)$ is a homomorphic retraction, we conclude that $xy=\sigma(xy)=\sigma(x)\cdot\sigma(y)$ and thus $$
\bar\varphi(xy)=\varphi(xy)=\varphi(\sigma(xy))=\varphi(\sigma(x)\cdot\sigma(y))=
\varphi(\sigma(x))\cdot\varphi(\sigma(y))=\sigma(\bar\varphi(x))\cdot
\sigma(\bar\varphi(y))=\sigma(\bar\varphi(x)\cdot\bar\varphi(y))=
\bar\varphi(x)\cdot
\bar\varphi(y).
$$
\end{proof}

\section{Automorphisms and characteristic ideals of superextensions of finite monogenic semigroups}\label{monsem}

A semigroup $S$ is called {\em monogenic} if it is generated by some element $a\in S$ in the sense that $S=\{a^n\}_{n\in\mathbb N}$. If a monogenic semigroup is
infinite, then it is isomorphic to the additive semigroup $\mathbb N$ of positive integer numbers. A
finite monogenic semigroup $S=\langle a\rangle$ also has simple
structure, see  \cite{Howie}. There are positive integer numbers $r$
and $m$ called the {\em index} and the {\em period} of $S$ such
that
\begin{itemize}
\item $S=\{a,a^2,\dots,a^{r+m-1}\}$ and $r+m-1=|S|$;
\item $a^{r+m}=a^{r}$;
\item $C_m:=\{a^r,a^{r+1},\dots,a^{r+m-1}\}$ is
 a cyclic and maximal subgroup of $S$ with the
neutral element $e=a^n\in C_m$ and generator $a^{n+1}$, where $n\in (m\cdot\mathbb N)\cap\{r,\dots,r+m-1\}$.
\end{itemize}

From now on, we denote by $\M_{r,m}$ a finite monogenic semigroup of
index $r$ and period $m$. Consider the shift
$$\sigma:\M_{r,m}\to a\M_{r,m},\;\;\sigma:x\mapsto ax,$$and observe that for
every $k\in\IN$ the shift $\sigma^k:\M_{r,m}\to a^k\M_{r,m}$, $\sigma^k:x\mapsto a^kx$, coincides with the $k$th iteration of $\sigma$.

Observe also that the subset $a^k\M_{r,m}=\sigma^k(\M_{r,m})$ coincides with
the characteristic ideal $\M_{r,m}^{\cdot (k+1)}$ of $\M_{r,m}$. For $k\ge r$
the characteristic ideal $\M_{r,m}^{\cdot k}=a^{k-1}\M_{r,m}$ coincides with the minimal
ideal of $\M_{r,m}$ and with a maximal subgroup $C_m$ of $\M_{r,m}$.

For the idempotent $e=a^n$ of the group
$C_m$ the shift $\sigma^n:\M_{r,m}\to a^n\M_{r,m}=C_m$ is a homomorphic retraction of $\M_{r,m}$ onto
its maximal subgroup $C_m$. This retraction will be denoted by $\rho$.

The following lemma distinguishes some characteristic ideals in the semigroup $\lambda(\M_{r,m})$.

\begin{lemma}\label{l:char-i} For every $k\ge 2$ the subsemigroup $\lambda(\M^{\cdot k}_{r,m})$ of $\lambda(\M_{r,m})$
coincides with the characteristic ideal  $\lambda(\M_{r,m})^{\cdot k}$ of the semigroup $\lambda(\M_{r,m})$.
\end{lemma}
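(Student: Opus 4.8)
The plan is to reduce the statement to the set equality $\lambda(\M_{r,m}^{\cdot k})=\lambda(\M_{r,m})^{\cdot k}$ of subsets of $\lambda(\M_{r,m})$: once this is proved, the fact that $\lambda(\M_{r,m})^{\cdot k}$ is a \emph{characteristic ideal} of $\lambda(\M_{r,m})$ is immediate from the general observation, recorded in the introduction, that $T^{\cdot k}$ is a characteristic ideal of any semigroup $T$ (here $T=\lambda(\M_{r,m})$). Write $S=\M_{r,m}$, let $a$ be a generator of $S$, and recall that the iterated shift $\sigma^{k-1}\colon S\to S$, $x\mapsto a^{k-1}x$, has image $\sigma^{k-1}(S)=a^{k-1}S=S^{\cdot k}$. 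Let $\iota\colon S^{\cdot k}\hookrightarrow S$ be the inclusion; we regard $\lambda(S^{\cdot k})$ as a subset of $\lambda(S)$ via the induced map $\lambda\iota$.

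First I would record two auxiliary facts. \emph{(a)} For the principal ultrafilter $\langle\{a\}\rangle\in\lambda(S)$ and any $\mathcal Y\in\lambda(S)$, the extension formula for $*$ (whose only relevant generator comes from the minimal member $\{a\}$) gives $\langle\{a\}\rangle*\mathcal Y=\langle aB:B\in\mathcal Y\rangle=\lambda\sigma(\mathcal Y)$; iterating, $a^{k-1}*\mathcal Y=\lambda\sigma^{k-1}(\mathcal Y)$, where $a^{k-1}$ denotes the $(k-1)$-fold $*$-product of $\langle\{a\}\rangle$ with itself. \emph{(b)} The functor $\lambda$ sends surjections to surjections: if $f\colon X\to Y$ is onto and $s\colon Y\to X$ satisfies $f\circ s=\mathrm{id}_Y$, then $\lambda f\circ\lambda s=\lambda(\mathrm{id}_Y)=\mathrm{id}$, so $\lambda f$ is onto.

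Using \emph{(a)} and \emph{(b)} I would establish $\lambda(S^{\cdot k})\subseteq\lambda(S)^{\cdot k}$. Since $\sigma^{k-1}\colon S\to S^{\cdot k}$ is a surjection, \emph{(b)} makes $\lambda(\sigma^{k-1}\colon S\to S^{\cdot k})\colon\lambda(S)\to\lambda(S^{\cdot k})$ onto; composing with $\lambda\iota$ and using that $\sigma^{k-1}\colon S\to S$ factors as $\iota\circ(\sigma^{k-1}\colon S\to S^{\cdot k})$, the image of $\lambda\sigma^{k-1}\colon\lambda(S)\to\lambda(S)$ is exactly the subset $\lambda(S^{\cdot k})$. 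By \emph{(a)} this image equals $\{a^{k-1}*\mathcal Y:\mathcal Y\in\lambda(S)\}$, and each such element is a $*$-product of the $k$ factors $\langle\{a\}\rangle,\dots,\langle\{a\}\rangle,\mathcal Y$, hence lies in $\lambda(S)^{\cdot k}$.

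For the reverse inclusion $\lambda(S)^{\cdot k}\subseteq\lambda(S^{\cdot k})$ I would use that, unwinding the extension formula, every generating set of a $k$-fold product $\mathcal M:=\mathcal X_1*\cdots*\mathcal X_k$ is a union of products $x_1\cdots x_k\in S^{\cdot k}$ and hence a subset of $S^{\cdot k}$; thus $\mathcal M$ admits a base consisting of subsets of $S^{\cdot k}$. It then remains to prove the auxiliary claim that any maximal linked family $\mathcal M$ on $S$ with a base in $S^{\cdot k}$ belongs to the subset $\lambda(S^{\cdot k})$, which I would do by verifying that $\mathcal L:=\{B\subseteq S^{\cdot k}:B\in\mathcal M\}$ is maximal linked on $S^{\cdot k}$ and generates $\mathcal M$: linkedness is inherited, $\langle\mathcal L\rangle=\mathcal M$ because every member of $\mathcal M$ contains a base set lying in $S^{\cdot k}$, and if $B\subseteq S^{\cdot k}$ meets every member of $\mathcal L$ then it meets every member of $\mathcal M$ (each contains such a base set), so $B\in\mathcal M$ by maximality of $\mathcal M$. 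The main obstacle is precisely this last point: membership $S^{\cdot k}\in\mathcal M$ by itself does \emph{not} force $\mathcal M\in\lambda(S^{\cdot k})$ — this already fails for suitable two-element subsets of a three-element set — so one must exploit the stronger fact, supplied by the product structure, that $\mathcal M$ has an entire base inside $S^{\cdot k}$.
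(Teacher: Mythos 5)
Your proposal is correct and follows essentially the same route as the paper: one inclusion via the surjectivity of the induced map $\lambda\sigma^{k-1}\colon\lambda(\M_{r,m})\to\lambda(\M_{r,m}^{\cdot k})$, $\A\mapsto a^{k-1}*\A$, and the other by unwinding the definition of the extended operation. You merely spell out in more detail the step the paper dismisses as following "from the definition of the semigroup operation", namely that a maximal linked family with a base inside $\M_{r,m}^{\cdot k}$ restricts to a maximal linked family on $\M_{r,m}^{\cdot k}$.
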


\begin{proof} By the functoriality of the superextension, the surjective map
$$\sigma^{k-1}:\M_{r,m}\to a^{k-1}\M_{r,m}=\M_{r,m}^{\cdot k},\;\;\sigma^{k-1}:x\mapsto a^{k-1}x,$$
induces the surjective map $\lambda \sigma^{k-1}:\lambda(\M_{r,m})\to\lambda(\M_{r,m}^{\cdot k})$, $\lambda\sigma^{k-1}:\A\mapsto a^{k-1}*\A$.
Then for every maximal linked upfamily $\mathcal B\in\lambda(a^{k-1}\M_{r,m})$ we can find a maximal linked upfamily $\A\in\lambda(\M_{k,m})$ with $\mathcal B=a^{k-1}*\A$ and conclude that $
\mathcal B=a^{k-1}*\A\in\lambda(\M_{k,m})^{\cdot k}$.

On the other hand, the definition of the semigroup operation on $\lambda(\M_{r,m})$
implies that $\lambda(\M_{r,m})^{\cdot k}\subset \lambda(\M_{r,m}^{\cdot k})= \lambda(a^{k-1}\M_{r,m})$.
So, $\lambda(\M_{r,m}^{\cdot k})=\lambda(\M_{r,m})^{\cdot k}$ is a characteristic ideal in $\lambda(\M_{r,m})$.
\end{proof}

\begin{theorem}\label{t3.2} Two finite monogenic semigroups are isomorphic if and only if their superextensions are isomorphic.
\end{theorem}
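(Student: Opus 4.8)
The statement asserts that $S\cong S'$ if and only if $\lambda(S)\cong\lambda(S')$ for finite monogenic $S,S'$. The forward implication is immediate from functoriality of $\lambda$: an isomorphism $S\to S'$ induces an isomorphism $\lambda(S)\to\lambda(S')$. So the plan is to prove the nontrivial converse, namely that $\lambda(\M_{r,m})\cong\lambda(\M_{r',m'})$ forces $r=r'$ and $m=m'$, the pair $(r,m)$ being a complete isomorphism invariant of a finite monogenic semigroup. The strategy is to recover both $r$ and $m$ from $\lambda(\M_{r,m})$ through the isomorphism-invariant cardinalities of its characteristic ideals.

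First I would fix an isomorphism $\Psi:\lambda(\M_{r,m})\to\lambda(\M_{r',m'})$ and observe that, since the power ideals $A^{\cdot k}$ are defined purely from the semigroup operation, $\Psi$ carries $\lambda(\M_{r,m})^{\cdot k}$ onto $\lambda(\M_{r',m'})^{\cdot k}$ for every $k$; hence these ideals have equal cardinality for all $k\in\IN$. Because $|\lambda(X)|$ depends only on $|X|$ (any bijection of finite sets induces a bijection of their superextensions), write $\lambda(n)$ for the common value $|\lambda(X)|$ with $|X|=n$, as in the introduction. Then the cardinality equalities read $\lambda\big(|\M_{r,m}^{\cdot k}|\big)=\lambda\big(|\M_{r',m'}^{\cdot k}|\big)$ for $k\ge2$, after using Lemma~\ref{l:char-i} to identify $\lambda(\M_{r,m})^{\cdot k}$ with $\lambda(\M_{r,m}^{\cdot k})$, and $\lambda(|\M_{r,m}|)=\lambda(|\M_{r',m'}|)$ for $k=1$.

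The key input is that the Ho\c sten--Morris sequence $n\mapsto\lambda(n)$ is strictly increasing, hence injective; this follows from its double-exponential growth recorded in the introduction together with the listed small values. Invoking injectivity, the $k=1$ equality yields $|\M_{r,m}|=|\M_{r',m'}|$, i.e.\ $r+m-1=r'+m'-1$. For any $k\ge\max\{r,r',2\}$ the ideal $\M_{r,m}^{\cdot k}$ equals the minimal ideal $C_m$, of cardinality $m$, and likewise $\M_{r',m'}^{\cdot k}$ has cardinality $m'$; injectivity of $\lambda$ then gives $m=m'$. Combining $r+m-1=r'+m'-1$ with $m=m'$ yields $r=r'$, so $\M_{r,m}\cong\M_{r',m'}$, completing the converse.

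I expect the main obstacle to be the justification of strict monotonicity (injectivity) of $n\mapsto\lambda(n)$, since these cardinalities are known explicitly only for small $n$: one must secure monotonicity for all $n$, either through the asymptotic growth estimate or by exhibiting, for each $n$, a non-surjective injection $\lambda(X)\hookrightarrow\lambda(Y)$ with $X\subsetneq Y$ and $|Y|=|X|+1$. A secondary point to state carefully is that the eventual (stable) value of the weakly decreasing sequence $k\mapsto|\M_{r,m}^{\cdot k}|$ is exactly $m$, which is guaranteed by the structure theorem for finite monogenic semigroups quoted at the start of this section.
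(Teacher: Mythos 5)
Your proposal is correct and follows essentially the same route as the paper: both recover $m$ from the cardinality of the superextension of the minimal characteristic power ideal (the maximal subgroup $C_m$) and recover $r+m-1$ from $|\lambda(\M_{r,m})|$ itself, using that $n\mapsto\lambda(n)$ is strictly increasing. For that last point, use the second of your two suggested justifications --- a non-surjective injection $f:X\to Y$ induces a non-surjective injection $\lambda f:\lambda(X)\to\lambda(Y)$ --- which is exactly the paper's argument and avoids any reliance on the asymptotic formula.
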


\begin{proof} If two semigroups are isomorphic, then their superextensions are isomorphic by the functoriality of the superextension in the category of semigroups \cite{G4}. Now assume that two monogenic semigroups $\M_{i,m}$ and $\M_{j,n}$ have isomorphic superextensions. Let $\psi:\lambda(\M_{i,m})\to\lambda(\M_{j,n})$ be an isomorphism.

Let $C_m$ be the maximal subgroup of $\M_{i,m}$. By Lemma~\ref{l:char-i}, the superextension $\lambda(C_m)$ is a characteristic subsemigroup of $\lambda(\M_{i,m})$, equal to the intersection $\bigcap_{k\in\IN}\lambda(\M_{i,m})^{\cdot k}$ of the characteristic ideals $\lambda(\M_{i,m})^{\cdot k}$. Then
$$\psi(\lambda(C_m))=\psi\big({\textstyle \bigcap}_{k\in\IN}\lambda(\M_{i,m})^{\cdot k}\big)=\bigcap_{k\in\IN}\psi\big(\lambda(\M_{i,m})^{\cdot k}\big)=
\bigcap_{k\in\IN}\big(\psi(\lambda(\M_{i,m}))\big)^{\cdot k}=
\bigcap_{k\in\IN}\lambda(\M_{j,n})^{\cdot k}=
\lambda(C_n)$$and hence
$|\lambda(C_m)|=|\lambda(C_n)|$.

Observe that two finite sets $X,Y$ have the same cardinality if $|\lambda(X)|=|\lambda(Y)|$.
Indeed, assuming that $|X|<|Y|$ we can choose an injective map $f:X\to Y$ with $f(X)\ne Y$ and obtain an injective
map $\lambda f:\lambda(X)\to\lambda(Y)$ with $\lambda f(\lambda(X))\ne\lambda(Y)$, which implies that $|\lambda(X)|<|\lambda(Y)|$. Now we see that the equality $|\lambda(C_m)|=|\lambda(C_n)|$ implies $n=m$.

On the other hand, the equality $|\lambda(\M_{i,m})|=|\lambda(\M_{j,n})|$ implies $i+m-1=|\M_{i,m}|=|\M_{j,n}|=j+n-1$. Since $m=n$, we obtain that $i=j$ and hence $\M_{i,n}=\M_{j,m}$.
\end{proof}

\begin{proposition}\label{mono1} If $r\ge 3$, then any automorphism $\psi$ of the semigroup $\lambda(\M_{r,m})$ has $\psi(x)=x$ for all $x\in \M_{r,m}$.
\end{proposition}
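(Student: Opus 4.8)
The plan is to reduce the whole statement to the single equality $\psi(\langle\{a\}\rangle)=\langle\{a\}\rangle$, where $a$ is the generator of $\M_{r,m}$ identified with its principal ultrafilter. Once $\psi(a)=a$ is known, everything else is formal: since $\M_{r,m}\subset\beta(\M_{r,m})\subset\lambda(\M_{r,m})$ is a subsemigroup and principal ultrafilters multiply by $\langle\{x\}\rangle*\langle\{y\}\rangle=\langle\{xy\}\rangle$, and since $\psi$ is a homomorphism, a straightforward induction gives $\psi(a^{j+1})=\psi(a*a^{j})=\psi(a)*\psi(a^{j})=a*a^{j}=a^{j+1}$ for every $j$. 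Thus $\psi$ fixes every element $a^{j}\in\M_{r,m}$, which is exactly the assertion.

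To obtain $\psi(a)=a$ I would exploit the characteristic ideal $\lambda(\M_{r,m}^{\cdot 2})=\lambda(\M_{r,m})^{\cdot 2}$ supplied by Lemma~\ref{l:char-i}. Because $r\ge 3\ge 2$, the ideal $\M_{r,m}^{\cdot 2}=a\M_{r,m}=\{a^{2},\dots,a^{r+m-1}\}$ is proper and satisfies $\M_{r,m}\setminus\M_{r,m}^{\cdot 2}=\{a\}$. The key claim is that the complement $\lambda(\M_{r,m})\setminus\lambda(\M_{r,m}^{\cdot 2})$ is the single point $\langle\{a\}\rangle$. Using that $\lambda(I)$ consists of the maximal linked upfamilies $\mathcal M$ with $I\in\mathcal M$, a family $\mathcal M$ lies outside $\lambda(\M_{r,m}^{\cdot 2})$ exactly when $\M_{r,m}^{\cdot 2}\notin\mathcal M$; by maximal linkedness this produces some $B\in\mathcal M$ with $B\cap\M_{r,m}^{\cdot 2}=\emptyset$, so $B\subseteq\M_{r,m}\setminus\M_{r,m}^{\cdot 2}=\{a\}$ and hence $\{a\}\in\mathcal M$. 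Since every member of a linked family must meet $\{a\}$, each member contains $a$, which forces $\mathcal M=\langle\{a\}\rangle$.

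Being an automorphism, $\psi$ preserves the characteristic ideal $\lambda(\M_{r,m}^{\cdot 2})$ and therefore its complement; as that complement is the singleton $\{\langle\{a\}\rangle\}$, we get $\psi(\langle\{a\}\rangle)=\langle\{a\}\rangle$, i.e. $\psi(a)=a$, and the reduction of the first paragraph completes the argument. I expect the only real work to lie in the second paragraph, namely justifying that $\lambda(\M_{r,m}^{\cdot 2})$, identified through Lemma~\ref{l:char-i} with the ideal $\lambda(\M_{r,m})^{\cdot 2}$, coincides with $\{\mathcal M:\M_{r,m}^{\cdot 2}\in\mathcal M\}$, together with the elementary fact that a maximal linked upfamily containing a singleton is the principal ultrafilter at that point; the propagation into the cyclic part $C_m$ is then automatic from the homomorphism property. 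I also note that the argument appears to use only $r\ge 2$, so the hypothesis $r\ge 3$ is comfortably sufficient.
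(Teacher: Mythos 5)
Your reduction in the first paragraph (from $\psi(a)=a$ to $\psi(a^j)=a^j$ for all $j$) is fine, but the key claim of the second paragraph is false, and the argument collapses with it. The subsemigroup $\lambda(\M_{r,m}^{\cdot 2})\subset\lambda(\M_{r,m})$ is the image of the map induced by the inclusion $\M_{r,m}^{\cdot2}\hookrightarrow\M_{r,m}$; a maximal linked upfamily $\mathcal M$ belongs to it iff $\mathcal M$ is \emph{generated by} subsets of $\M_{r,m}^{\cdot2}$, i.e.\ iff $A\cap\M_{r,m}^{\cdot2}\in\mathcal M$ for every $A\in\mathcal M$ --- not merely iff $\M_{r,m}^{\cdot 2}\in\mathcal M$. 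These two conditions genuinely differ: already for $|\M_{r,m}|=3$ the family $\triangle=\langle\{a,a^2\},\{a,a^3\},\{a^2,a^3\}\rangle$ contains the set $\M_{r,m}^{\cdot2}=\{a^2,a^3\}$ as an element, yet it does not lie in $\lambda(\M_{r,m}^{\cdot2})$, which consists only of the two principal ultrafilters $a^2,a^3$. Consequently the complement $\lambda(\M_{r,m})\setminus\lambda(\M_{r,m}^{\cdot2})$ is far from a singleton (for $|\M_{r,m}|=4$ it has $12-4=8$ elements, containing all $\triangle_i$ with $i\ne 1$ and all $\square_i$), so the $\psi$-invariance of the characteristic ideal gives no control over where $\psi$ sends $a$ inside that complement.

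Your closing observation that the argument ``appears to use only $r\ge2$'' is in fact a disproof of it: for $r=2$ the conclusion fails, since the paper computes $\Aut(\lambda(\M_{2,2}))\cong S_{\{a,\triangle\}}$, whose nontrivial element moves $a$. So any correct proof must use $r\ge3$ in an essential way. The paper's argument does start from the same characteristic ideal $a*\lambda(\M_{r,m})=\lambda(\M_{r,m})^{\cdot2}$, but exploits it multiplicatively rather than set-theoretically: from $\psi(a)*\lambda(\M_{r,m})=\psi\big(a*\lambda(\M_{r,m})\big)=a*\lambda(\M_{r,m})\ni a^2$ one gets a factorization $a^2=\psi(a)*\A$ for some $\A\in\lambda(\M_{r,m})$, and then one invokes the fact that for $r\ge3$ the equation $x*y=a^2$ has the unique solution $x=y=a$ in $\lambda(\M_{r,m})$. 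That uniqueness of factorization of $a^2$ is precisely where $r\ge3$ enters, and it is the ingredient your proposal is missing.
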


\begin{proof} Let $a$ be the (unique) generator of the monogenic semigroup $\M_{r,m}$.
By Lemma~\ref{l:char-i}, $a*\lambda(\M_{r,m})=\lambda(\M_{r,m})^{\cdot 2}$ is a characteristic
ideal in $\lambda(\M_{r,m})$. Then for any automorphism $\psi\in\Aut(\lambda(\M_{r,m}))$ we get
 $$\psi(a)*\lambda(\M_{r,m})=\psi(a)*\psi(\lambda(\M_{r,m}))=\psi(a*\lambda(\M_{r,m}))=a*\lambda(\M_{r,m})\ni a^2$$ and
 hence $a^2=\psi(a)*\A$ for some $\A\in\lambda(\M_{r,m})$. Taking into account that for $r\ge 3$, the equality $a^2=x*y$ has a unique solution $x=y=a$ in $\lambda(\M_{r,m})$, we conclude that $\psi(a)=a$ and hence $\psi(x)=x$ for all $x\in\M_{r,m}$.
\end{proof}

In the following proposition by $e$ we denote the unique idempotent of the
group $C_m\subset\M_{r,m}$ and by $\rho:\M_{r,m}\to C_m$, $\rho:x\mapsto ex$, the homomorphic
retraction of $\M_{r,m}$ onto $C_m$. The homomorphic retraction $\rho$ induces a homomorphic
retraction $\bar\rho:\lambda(\M_{r,m})\to\lambda(C_m)$, $\bar\rho:\A\mapsto e*\A$.

\begin{theorem}\label{t:r=2} For $r=2$ the homomorphic retraction $\bar\rho:\lambda(\M_{r,m})\to\lambda(C_m)\subset\lambda(\M_{r,m})$ has the following properties:
\begin{enumerate}
\item $\A*\mathcal B=\bar\rho(\A)*\mathcal B=\A*\bar\rho(\mathcal B)=\bar\rho(\A)*\bar\rho(\mathcal B)$ for any $\A,\mathcal B\in\lambda(\M_{r,m})$;
\item $\psi(x)=x$ for any $x\in C_m$ and any $\psi\in\Aut(\lambda(\M_{r,m}))$;
\item the homomorphic retraction $\bar\rho$ is an auto-good shift of $\lambda(\M_{r,m})$;
\item the operator $R:\Aut(\lambda(\M_{r,m}))\to\Aut(\lambda(C_m))$ has kernel isomorphic to $\prod_{\mathcal L\in\lambda(C_m)}S_{\bar\rho^{-1}(\mathcal L)\setminus\{\mathcal L\}}$ and the range $R(\Aut(\M_{r,m}))=\{\varphi\in\Aut(\lambda(C_m)):\forall \mathcal L\in\lambda(C_m)\;\;|\bar\rho^{-1}(\varphi(\mathcal L))|=|\bar\rho^{-1}(\mathcal L)|\}$.
\end{enumerate}
\end{theorem}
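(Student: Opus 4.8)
The plan is to treat the four items in order, deriving (2)--(4) from (1) together with two elementary facts about the monoid $\lambda(C_m)$. For (1), the decisive feature of the case $r=2$ is that $\M_{r,m}^{\cdot 2}=a\M_{r,m}=C_m$, so by Lemma~\ref{l:char-i} every product $\A*\mathcal B$ already lies in $\lambda(\M_{r,m})^{\cdot 2}=\lambda(C_m)$. On $\lambda(C_m)$ the element $e$ is a two-sided identity (since $ex=xe=x$ for $x\in C_m$), so $e*\mathcal C=\mathcal C$ for every $\mathcal C\in\lambda(C_m)$. First I would record the identities $\bar\rho(\A)=e*\A=\A*e$, the last equality being a direct unpacking of the operation using $xe=\rho(x)$. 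Then $\bar\rho(\A)*\mathcal B=e*(\A*\mathcal B)=\A*\mathcal B$ because $\A*\mathcal B\in\lambda(C_m)$; next $\A*\bar\rho(\mathcal B)=(\A*e)*\mathcal B=\bar\rho(\A)*\mathcal B=\A*\mathcal B$ by associativity; and $\bar\rho(\A)*\bar\rho(\mathcal B)=\bar\rho(\A*\mathcal B)=\A*\mathcal B$ since $\bar\rho$ is a homomorphism fixing $\lambda(C_m)$ pointwise. This yields all four equalities.

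For (2), note $\lambda(C_m)=\lambda(\M_{r,m})^{\cdot 2}$ is a characteristic ideal, so each $\psi\in\Aut(\lambda(\M_{r,m}))$ restricts to an automorphism $\varphi$ of the monoid $\lambda(C_m)$. Since $e$ is the unique identity of $\lambda(C_m)$, automorphisms fix it, so $\psi(e)=e$; consequently $\psi\circ\bar\rho=\bar\rho\circ\psi$ (as $\psi(e*\A)=\psi(e)*\psi(\A)=e*\psi(\A)$), whence $\psi$ carries each fibre $\bar\rho^{-1}(\mathcal L)$ bijectively onto $\bar\rho^{-1}(\psi(\mathcal L))$. Next I would show the group of units of $\lambda(C_m)$ equals $C_m$: if $\mathcal L*\mathcal N=\langle\{e\}\rangle$, then some member of $\mathcal L$ must be a singleton $\{x\}$, for otherwise two distinct singletons $\{x^{-1}\},\{y^{-1}\}$ would be forced into the linked family $\mathcal N$; hence $\mathcal L=\langle\{x\}\rangle$ is a point. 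Thus $C_m$ is characteristic and $\varphi|C_m$ is a group automorphism. Finally, since $\rho$ merges exactly the two points $a$ and $c=a^{n+1}$ and is injective elsewhere, $c$ is the unique point of $C_m$ whose fibre $\bar\rho^{-1}(c)$ has more than one element, while $\bar\rho^{-1}(x)=\{x\}$ for every other $x\in C_m$. As $\psi$ preserves fibre cardinalities and maps $C_m$ onto $C_m$, it must fix $c$; being a group automorphism of $C_m=\langle c\rangle$ fixing a generator, $\varphi|C_m$ is the identity, proving (2).

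Statement (3) is then immediate: the good-shift axioms follow from (1) (the implication $\bar\rho(\A)=\bar\rho(\mathcal B)\Rightarrow \A*\Z=\mathcal B*\Z$ and $\Z*\A=\Z*\mathcal B$ reads off from $\A*\Z=\bar\rho(\A)*\Z$ and $\Z*\A=\Z*\bar\rho(\A)$), while $\lambda(\M_{r,m})\cdot\lambda(\M_{r,m})=\lambda(C_m)=\bar\rho(\lambda(\M_{r,m}))$ supplies the inclusion $XX\subset\sigma(X)$, and the commutation with automorphisms was obtained in (2); recall also that $\bar\rho$ is a homomorphic retraction. For (4) I would invoke Theorem~\ref{t:Ker} with $X=\lambda(\M_{r,m})$ and $\sigma=\bar\rho$: since $\bar\rho|\lambda(C_m)=\mathrm{id}$, each set $\sigma^{-1}(\mathcal L)\cap\sigma(X)$ is the singleton $\{\mathcal L\}$, so the condition $\varphi(\sigma^{-1}(\mathcal L)\cap\sigma(X))=\sigma^{-1}(\varphi(\mathcal L))\cap\sigma(X)$ is automatic and the two cardinality conditions collapse to $|\bar\rho^{-1}(\varphi(\mathcal L))|=|\bar\rho^{-1}(\mathcal L)|$; likewise $\sigma^{-1}(\mathcal L)\setminus\sigma(X)=\bar\rho^{-1}(\mathcal L)\setminus\{\mathcal L\}$, giving the stated kernel and range.

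I expect the main obstacle to be the two structural facts about $\lambda(C_m)$ used in (2) --- that $e$ is its unique identity and that its group of units is exactly $C_m$ --- together with the fibre computation identifying $c$ as the only point of $C_m$ with a non-trivial $\bar\rho$-preimage; once these are in place, everything else is bookkeeping built on (1) and Theorem~\ref{t:Ker}.
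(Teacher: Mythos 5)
Your proposal is correct and follows essentially the same route as the paper: item (1) from the fact that all products land in $\lambda(C_m)$ where $e$ acts as the identity (the paper phrases this elementwise via $xy=\rho(x)y=x\rho(y)=\rho(xy)$ in $\M_{2,m}$), item (2) by fixing $e$, comparing $\bar\rho$-fibre cardinalities to single out $ae$, and using that $ae$ generates $C_m$, and items (3)--(4) by feeding (1) and (2) into Theorem~\ref{t:Ker}. The only substantive difference is that where the paper cites \cite{BG9} for $\psi(C_m)=C_m$, you give a self-contained and correct argument identifying $C_m$ with the group of units of the monoid $\lambda(C_m)$.
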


\begin{proof}
 1. The first statement follows from the definition of the semigroup operation on $\lambda(\M_{r,m})$ and the equality $xy=\rho(x)\cdot y=x\cdot\rho(y)=\rho(x\cdot y)$ holding for any elements $x,y\in\M_{r,m}=\M_{2,m}$.
\smallskip

2. Fix any automorphism $\psi$ of the semigroup $\lambda(\M_{r,m})$.
By Lemma~\ref{l:char-i}, $\lambda(C_m)$ is a characteristic ideal of $\lambda(\M_{r,m})$,
so the restriction $\psi|\lambda(C_m)$ is an automorphism of the semigroup $\lambda(C_m)$. In \cite{BG9} it was proved that $\psi(C_m)=C_m$.
Taking into account that $e$ is a unique idempotent of $C_m$, we conclude that $\psi(e)=e$.

Since $\psi$ is an automorphism of $\lambda(\M_{r,m})$, for every $\A\in\lambda(\M_{r,m})$ we have
$$\psi(\bar\rho(\A))=\psi(e*\A)=\psi(e)*\psi(\A)=e*\psi(\A)=\bar\rho(\psi(\A)),$$
which implies that $\psi(\bar\rho^{-1}(\mathcal B))=\bar\rho^{-1}(\psi(\mathcal B))$ and hence
$|\bar\rho^{-1}(\mathcal B)|=|\bar\rho^{-1}(\psi(\mathcal B))|$ for all $\mathcal B\in\lambda(C_m)$. In particular,
for the generator $a$ of the semigroup $\M_{r,m}$ we get $|\bar\rho^{-1}(ae)|=|\bar\rho^{-1}(\psi(ae))|$. Now observe that
$\bar\rho^{-1}(ae)\supset\rho^{-1}(ae)=\{ae,a\}$ and $\bar\rho^{-1}(x)=\rho^{-1}(x)=\{x\}$ for any $x\in C_m\setminus\{ae\}$. This implies that $\psi(ae)=ae$ and hence $\psi(x)=x$ for all $x\in C_m$ (as $ae$ is a generator of the group $C_m$).
\smallskip

3. The third statement follows from the preceding two statements.

4. The fourth statement follows from the preceding statement and Theorem~\ref{t:Ker}.
\end{proof}

\begin{lemma}\label{l:auto-shift} If $r\ge 2$, then the map $$\bar\sigma:\lambda(\M_{r,m})\to\lambda(\M_{r,m}^{\cdot2}),\;\;\bar\sigma:\A\mapsto a*\A$$ is an auto-shift of the semigroup $\lambda(\M_{r,m})$.
\end{lemma}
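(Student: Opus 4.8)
The plan is to verify directly the two clauses of Definition~\ref{d:good} for the map $\bar\sigma:\A\mapsto a*\A$, and then to upgrade ``good shift'' to ``auto-shift''. First I note that $\bar\sigma$ coincides with the functorial map $\lambda\sigma$ induced by the element shift $\sigma:\M_{r,m}\to a\M_{r,m}$, $\sigma:x\mapsto ax$, since $a*\A=\langle aA:A\in\A\rangle$. By Lemma~\ref{l:char-i} (with $k=2$) the map $\bar\sigma$ is a surjection of $\lambda(\M_{r,m})$ onto $\lambda(\M_{r,m}^{\cdot2})=\lambda(\M_{r,m})^{\cdot2}=\lambda(\M_{r,m})\cdot\lambda(\M_{r,m})$. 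This immediately yields the inclusion $\lambda(\M_{r,m})\cdot\lambda(\M_{r,m})\subset\bar\sigma(\lambda(\M_{r,m}))$ required by the first clause, so the whole difficulty lies in the second clause.

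The key element-level input is an analysis of the fibers of $\sigma$. Using the normal form $a^s=a^t\iff s=t$ or ($s,t\ge r$ and $s\equiv t\pmod m$), I would show that the only non-singleton fiber of $\sigma$ is $\sigma^{-1}(a^r)=\{a^{r-1},a^{r+m-1}\}$; here the hypothesis $r\ge2$ is used to guarantee that $a^{r-1}$ is a genuine element of $\M_{r,m}$. Moreover these two points act identically: $a^{r-1}z=a^{r+m-1}z=za^{r-1}=za^{r+m-1}$ for every $z\in\M_{r,m}$, because $r-1+k\ge r$ for all $k\ge1$ and $(r-1+k)+m=r+m-1+k$. Consequently, for every subset $C\subset\M_{r,m}$ and every $x\in\M_{r,m}$ the set $xC$ depends on $x$ only through $\sigma(x)$, and for every $z$ the set $zA$ depends on $A$ only through $\sigma(A)=aA$. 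Writing $y\bullet C:=xC$ for any $x$ with $\sigma(x)=y$, this $\bullet$ is well defined on $a\M_{r,m}$.

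The main obstacle is lifting this identification to the superextension, i.e. proving that $a*\A=a*\mathcal B$ implies $\A*\mathcal C=\mathcal B*\mathcal C$ and $\mathcal C*\A=\mathcal C*\mathcal B$ for every $\mathcal C$. The plan is to work with the generators $\bigcup_{x\in A}xC_x$ of the product. For the right factor I would show that $\A*\mathcal C$ is determined by $a*\A$ and $\mathcal C$: collecting the selectors along the fibers of $\sigma$ (replacing $\{C_x\}_{x\in A}$ by its fiberwise unions $C_y':=\bigcup_{\sigma(x)=y}C_x\in\mathcal C$) rewrites every generator as $\bigcup_{x\in A}xC_x=\bigcup_{y\in aA}y\bullet C_y'$, an expression involving $\A$ only through the generating sets $aA$ of $a*\A$; hence $a*\A=a*\mathcal B$ forces $\A*\mathcal C=\mathcal B*\mathcal C$. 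For the left factor I would argue by mutual inclusion: given a generator $\bigcup_{z\in C}zA_z$ of $\mathcal C*\A$, for each $z$ the set $aA_z$ lies in $a*\A=a*\mathcal B$, so there is $B_z\in\mathcal B$ with $aB_z\subset aA_z$, and then $zB_z\subset zA_z$ by the fiber insensitivity of left translation; thus the generator $\bigcup_{z\in C}zB_z$ of $\mathcal C*\mathcal B$ is contained in the given generator, giving $\mathcal C*\A\subset\mathcal C*\mathcal B$, and symmetry finishes the equality. This establishes that $\bar\sigma$ is a good shift.

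It remains to check the auto-shift condition $\psi\circ\bar\sigma=\bar\sigma\circ\psi$ for every $\psi\in\Aut(\lambda(\M_{r,m}))$. Writing $\psi(\bar\sigma(\A))=\psi(a*\A)=\psi(a)*\psi(\A)$, it suffices to prove $\psi(a)*\mathcal D=a*\mathcal D$ for all $\mathcal D$. For $r\ge3$ this is immediate from Proposition~\ref{mono1}, which gives $\psi(a)=a$. For $r=2$ I would invoke Theorem~\ref{t:r=2}: by part~(1) every product depends on its left factor only through $\bar\rho$, while $\bar\rho(\psi(a))=e*\psi(a)=\psi(e*a)=\psi(ea)=ea=\bar\rho(a)$, the penultimate equality holding because $ea\in C_m$ is fixed by $\psi$ by part~(2); hence $\psi(a)*\mathcal D=\bar\rho(\psi(a))*\mathcal D=\bar\rho(a)*\mathcal D=a*\mathcal D$. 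In either case $\psi(a)*\psi(\A)=a*\psi(\A)=\bar\sigma(\psi(\A))$, so $\bar\sigma$ is an auto-shift. The only real risk in the write-up is the bookkeeping of generators in the third paragraph; everything else is routine once the fiber structure of $\sigma$ is in hand.
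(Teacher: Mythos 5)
Your proof is correct and follows essentially the same route as the paper: the range condition via Lemma~\ref{l:char-i}, the commutation $\psi\circ\bar\sigma=\bar\sigma\circ\psi$ via Proposition~\ref{mono1} for $r\ge3$ and Theorem~\ref{t:r=2} for $r=2$, and the cancellation property by manipulating generators using the fact that elements in a fiber of $x\mapsto ax$ act identically on both sides. The only real difference is cosmetic --- you handle the varying-left-factor case by taking fiberwise unions of selectors where the paper re-indexes selectors along a smaller set $A'\in\A'$ with $aA'\subset aA$ --- though your phrase ``involves $\A$ only through the generating sets $aA$'' still needs that extra monotonicity step (choosing $B\in\mathcal B$ with $aB\subset aA$), since $a*\A=a*\mathcal B$ does not identify the families $\{aA:A\in\A\}$ and $\{aB:B\in\mathcal B\}$.
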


\begin{proof}
It is clear that $\bar\sigma(\lambda(\M_{r,m}))=a*\lambda(\M_{r,m})=\lambda(\M_{r,m})^{\cdot 2}$. Next, we show that $\bar\sigma\circ\psi=\psi\circ \bar\sigma$ for every automorphism $\psi$ of $\lambda(\M_{r,m})$.

If $r\ge 3$, then $\psi(a)=a$ by Proposition~\ref{mono1} and hence $$\psi\circ\bar\sigma(\A)=\psi(a*\A)=\psi(a)*\psi(\A)=a*\psi(\A)=\bar\sigma\circ\psi(\A)$$for any $\A\in\lambda(\M_{r,m})$.

If $r=2$, then $ax=aex$ for any $x\in\M_{r,m}$. By Theorem~\ref{t:r=2}, $\psi(ae)=ae$ and then
$$\psi\circ\bar\sigma(\A)=\psi(a*\A)=\psi(ae*\A)=\psi(ae)*\psi(\A)=ae*\psi(\A)=a*\psi(\A)=
\bar\sigma\circ\psi(\A)$$for any $\A\in\lambda(\M_{r,m})$.

It remains to prove that for any $\A,\A'\in\lambda(\M_{r,m})$ with $\bar\sigma(\A)=\bar\sigma(\A')$ the equalities $\A*\mathcal B=\A'*\mathcal B$ and $\mathcal B*\A=\mathcal B*\A'$ hold for all $\mathcal B\in\lambda(\M_{r,m})$.

It follows from $a*\A=\bar\sigma(\A)=\bar\sigma(\A')=a*\A'$ that $a^n*\A=a^n*\A'$ for all $n\in\IN$ and hence $x*\A=x*\A'$
for all $x\in\M_{r,m}$. To see that $\mathcal B*\A=\mathcal B*\A'$, take any set $C\in\mathcal B*\A$ and find
a set $B\in\mathcal B$ and a family $(A_b)_{b\in B}\in \A^B$ such that $\bigcup_{b\in B}bA_b\subset C$.
For every $b\in B$, we can use the equality $bA_b\in b*\A=b*\A'$ to find a set $A'_b\in \A'$ such that $bA_b'\subset bA_b$.
Then $$\mathcal B*\A'\ni\bigcup_{b\in B}bA_b'\subset\bigcup_{b\in B}bA_b\subset C$$and hence $C\in\mathcal B*\A'$.
By analogy we can prove that $\mathcal B*\A'\subset\mathcal B*\A$.

Next, we prove that $\A*\mathcal B=\A'*\mathcal B$. Given any element $C\in\A*\mathcal B$, find a set $A\in\A$ and a family $\{B_x\}_{x\in A}\subset\mathcal B$ such that $\bigcup_{x\in A}xB_x\subset C$. Since $aA\in a*\A=a*\A'$, there exists a set $A'\in\A'$ such that $aA'\subset aA$. For any $y\in A'$ find $x\in A$ with $ay=ax$ and put $B_{y}:=B_x$. We claim that $\bigcup_{y\in A'}yB_y\subset C$. Indeed, for any $z\in \bigcup_{y\in A'}yB_y$ we can find $y\in A'$ and $b\in B_y$ such that $z=yb\in yB_y$. For the element $y$ there exists an element $x\in A$ such that $ax=ay$ and $B_y=B_x$. The equality $ax=ay$ implies $a^kx=a^ky$ for all $k\in\IN$. In particular, $bx=by$. Then $z=yb=by=bx=xb\in xB_x\subset C$ and hence $\A'*\mathcal B\ni\bigcup_{y\in A'}B_y\subset C$, which implies $C\in\A'*\mathcal B$ and $\A*\mathcal B\subset \A'*\mathcal B$. By analogy we can prove that $\A'*\mathcal B\subset \A*\mathcal B$ and thus $\A'*\mathcal B=\A*\mathcal B$.
\end{proof}

\begin{corollary}\label{c:good} If $r\ge 2$, then for any $k\in\IN$ the map $$\bar\sigma_k:\lambda(\M^{\cdot k}_{r,m})\to\lambda(\M_{r,m}^{\cdot (k+1)}),\;\;\bar\sigma_k:\A\mapsto a*\A,$$ is a good shift of the semigroup $\lambda(\M_{r,m}^{\cdot k})$.
\end{corollary}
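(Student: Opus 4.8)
The plan is to verify the two defining conditions of a good shift (Definition~\ref{d:good}) directly for the self-map $\bar\sigma_k$ of $X:=\lambda(\M_{r,m}^{\cdot k})$. For $k=1$ the map $\bar\sigma_1$ is exactly $\bar\sigma$, so the assertion is the good-shift part of Lemma~\ref{l:auto-shift}, and the same argument will in fact work for every $k\ge 1$. The point to keep in mind is that, unlike $\M_{r,m}$ itself, the ideal $\M_{r,m}^{\cdot k}$ is in general \emph{not} monogenic (e.g.\ $\M_{3,2}^{\cdot 2}=\{a^2,a^3,a^4\}$ is not), so Lemma~\ref{l:auto-shift} cannot simply be quoted for it. What the argument really uses is only that every element of $\M_{r,m}^{\cdot k}$ is a power $a^j$ with $j\ge k\ge 1$, and this remains true; recognising that this is the sole structural input, rather than monogenicity, is the main conceptual obstacle.

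First I would pin down the range of $\bar\sigma_k$. Arguing exactly as in Lemma~\ref{l:char-i}, I apply $\lambda$ to the restriction of the shift $\sigma\colon x\mapsto ax$ to the ideal, which is a surjection $\M_{r,m}^{\cdot k}\to a\M_{r,m}^{\cdot k}=\M_{r,m}^{\cdot(k+1)}$; the induced map $\lambda\sigma\colon\lambda(\M_{r,m}^{\cdot k})\to\lambda(\M_{r,m}^{\cdot(k+1)})$ is again surjective and satisfies $\lambda\sigma(\A)=a*\A=\bar\sigma_k(\A)$. Since $\M_{r,m}^{\cdot(k+1)}\subset\M_{r,m}^{\cdot k}$ gives $\lambda(\M_{r,m}^{\cdot(k+1)})\subset\lambda(\M_{r,m}^{\cdot k})$, the map $\bar\sigma_k$ is indeed a self-map of $X$ with $\bar\sigma_k(X)=\lambda(\M_{r,m}^{\cdot(k+1)})$.

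Next I would check the inclusion $X\cdot X\subset\bar\sigma_k(X)$. For $\A,\mathcal B\in\lambda(\M_{r,m}^{\cdot k})$, every generating set $\bigcup_{x\in A}xB_x$ of the product $\A*\mathcal B$ is assembled from products $a^ia^j$ with $i,j\ge k$, whose exponent satisfies $i+j\ge 2k\ge k+1$. Hence all these products lie in $\M_{r,m}^{\cdot(k+1)}$, the upfamily $\A*\mathcal B$ is supported on $\M_{r,m}^{\cdot(k+1)}$, and therefore $\A*\mathcal B\in\lambda(\M_{r,m}^{\cdot(k+1)})=\bar\sigma_k(X)$, as required.

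Finally, for the separation condition I would take $\A,\A'\in X$ with $\bar\sigma_k(\A)=\bar\sigma_k(\A')$, i.e.\ $a*\A=a*\A'$, and reproduce the last three paragraphs of the proof of Lemma~\ref{l:auto-shift} verbatim inside the ideal $\M_{r,m}^{\cdot k}$. From $a*\A=a*\A'$ one deduces $a^n*\A=a^n*\A'$ for all $n\in\IN$, hence $x*\A=x*\A'$ for every $x\in\M_{r,m}^{\cdot k}$, each such $x$ being a positive power of $a$; the set manipulations with the unions $\bigcup_b bA_b$ yielding $\mathcal B*\A=\mathcal B*\A'$ and $\A*\mathcal B=\A'*\mathcal B$ then go through unchanged, their only algebraic ingredient being the implication $ax=ay\Rightarrow bx=by$ valid for powers $b=a^j$ of $a$. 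The only thing to monitor is that all products are computed in the subsemigroup $\lambda(\M_{r,m}^{\cdot k})$ of $\lambda(\M_{r,m})$, where the operation agrees with that of $\lambda(\M_{r,m})$; beyond this bookkeeping there is no genuine difficulty.
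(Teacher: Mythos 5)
Your argument is correct and matches the (omitted) reasoning the paper intends: the paper states this as an unproved corollary of Lemma~\ref{l:auto-shift}, and your verification of the two good-shift conditions --- the inclusion $XX\subset\lambda(\M_{r,m}^{\cdot 2k})\subset\lambda(\M_{r,m}^{\cdot(k+1)})=\bar\sigma_k(X)$ and the separation property --- supplies exactly the missing details, with the correct observation that only ``every element is a power of $a$,'' not monogenicity, is used. One small economy you could have taken: the separation condition need not be re-proved inside the ideal at all, since Lemma~\ref{l:auto-shift} already gives $\A*\mathcal B=\A'*\mathcal B$ and $\mathcal B*\A=\mathcal B*\A'$ for \emph{all} $\mathcal B\in\lambda(\M_{r,m})$, and restricting the quantifier to the subsemigroup $\lambda(\M_{r,m}^{\cdot k})$ only weakens the statement.
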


Lemma~\ref{l:auto-shift} and Theorem~\ref{t:Ker} have the following:

\begin{corollary}\label{c:Ker} Assume that $r\ge 2$. The operator $R:\Aut(\lambda(\M_{r,m}))\to\Aut(\lambda(\M_{r,m}^{\cdot2}))$
has kernel isomorphic to $\prod_{\mathcal L\in\lambda(\M_{r,m}^{\cdot 2})}S_{\bar\sigma^{-1}(\mathcal L)\setminus\lambda(\M_{r,m}^{\cdot2})}$
and range $R(\Aut(\M_{r,m}))\subset H$ where
$$
\begin{aligned}
H=\big\{\varphi\in\Aut(\lambda(\M_{r,m}^{\cdot 2})):\ &\forall \mathcal L\in\lambda(\M_{r,m}^{\cdot 2})\;\;\varphi(\bar\sigma^{-1}(\mathcal L)\cap\lambda(\M_{r,m}^{\cdot2}))=\bar\sigma^{-1}(\mathcal L)\cap\lambda(\M_{r,m}^{\cdot2})\mbox{ and }\\
&\forall C\in\Xi_{\lambda(\M_{r,m})}\;|\bar\sigma^{-1}(\varphi(\mathcal L))\cap C|=|\bar\sigma^{-1}(\mathcal L)\cap C|\big\}.
\end{aligned}
$$
\end{corollary}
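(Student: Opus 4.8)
The plan is to read this corollary off Theorem~\ref{t:Ker}, applied to the semigroup $X:=\lambda(\M_{r,m})$ together with the map $\sigma:=\bar\sigma$. The single hypothesis of Theorem~\ref{t:Ker}---that $\sigma$ be an auto-shift of $X$---is supplied, under the standing assumption $r\ge2$, by Lemma~\ref{l:auto-shift}. Before quoting the theorem I would only record the identification of its image: the opening line of the proof of Lemma~\ref{l:auto-shift} gives $\bar\sigma(\lambda(\M_{r,m}))=a*\lambda(\M_{r,m})=\lambda(\M_{r,m})^{\cdot2}$, and Lemma~\ref{l:char-i} gives $\lambda(\M_{r,m})^{\cdot2}=\lambda(\M_{r,m}^{\cdot2})$, so $\sigma(X)=\lambda(\M_{r,m}^{\cdot2})$.

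With this identification the two assertions of the corollary are obtained by transcription. Substituting $\sigma(X)=\lambda(\M_{r,m}^{\cdot2})$ into the kernel formula $\mathrm{Ker}(R)\cong\prod_{x\in\sigma(X)}S_{\sigma^{-1}(x)\setminus\sigma(X)}$ of Theorem~\ref{t:Ker} reproduces verbatim the stated isomorphism for $\mathrm{Ker}(R)$, and the inclusion $R(\Aut(X))\subset H$ is exactly the inclusion $R(\Aut(X))\subset H\subset G$ provided by the theorem. Since $\bar\sigma$ is not a homomorphic retraction of $\lambda(\M_{r,m})$ onto $\lambda(\M_{r,m}^{\cdot2})$, only this inclusion is available, not the equality $R(\Aut(X))=H=G$ reserved for the retraction case.

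The only step that is not pure bookkeeping---and the one I would treat most carefully---is verifying that the set $H$ as displayed in the corollary is the same subset of $\Aut(\lambda(\M_{r,m}^{\cdot2}))$ as the set $H$ defined inside Theorem~\ref{t:Ker}. Here I would note two things. First, the corollary omits the explicit requirement $\varphi\in G$ and instead imposes only the first clause of $G$ together with the cardinality condition over all $C\in\Xi_{\lambda(\M_{r,m})}$; the second clause of $G$, namely $|\sigma^{-1}(x)\setminus\sigma(X)|=|\sigma^{-1}(\varphi(x))\setminus\sigma(X)|$, is then automatic, being the instance $C=\lambda(\M_{r,m})\setminus\lambda(\M_{r,m}^{\cdot2})$ of that cardinality condition (the complement of the characteristic ideal $\lambda(\M_{r,m}^{\cdot2})$ is again characteristic, exactly as used for the inclusion $H\subset G$ in the proof of Theorem~\ref{t:Ker}). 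Second, the first clause is written in the corollary with $\bar\sigma^{-1}(\mathcal L)$ on both sides rather than with $\bar\sigma^{-1}(\varphi(\mathcal L))$ on the right; matching it to the form appearing in $G$ rests on the observation, extracted from the proof of Theorem~\ref{t:Ker}, that $\sigma^{-1}(\mathcal L)=\sigma^{-1}(\varphi(\mathcal L))$ for the relevant maps $\varphi$. Once these two reconciliations are in place the corollary's $H$ coincides with the theorem's, and the whole argument collapses to the single sentence ``apply Theorem~\ref{t:Ker} to the auto-shift $\bar\sigma$ of Lemma~\ref{l:auto-shift}.''
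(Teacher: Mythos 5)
Your proposal is correct and follows exactly the route the paper intends: the paper offers no separate argument for this corollary beyond the sentence that it follows from Lemma~\ref{l:auto-shift} together with Theorem~\ref{t:Ker}, which is precisely your derivation. Your additional care in reconciling the corollary's version of $H$ with the theorem's $G$ and $H$ (via the characteristic set $\lambda(\M_{r,m})\setminus\lambda(\M_{r,m}^{\cdot2})$ and the identification $\varphi(\bar\sigma^{-1}(\mathcal L)\cap\lambda(\M_{r,m}^{\cdot2}))=\bar\sigma^{-1}(\mathcal L)\cap\lambda(\M_{r,m}^{\cdot2})$ extracted from the theorem's proof) is a welcome tightening of a transcription the paper leaves implicit.
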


\section{Automorphism groups of superextensions of monogenic semigroups of cardinality $\le 5$}\label{s4}

In this section we shall describe the structure of the automorphism groups of superextensions of all monogenic semigroups $\M_{r,m}$ of cardinality $|\M_{r,m}|\le 5$.

\subsection{The semigroups $\lambda(\M_{1,1})$, $\lambda(\M_{1,2})$ and $\lambda(\M_{2,1})$} For any $r,m\in\IN$ with $r+m\le 3$, the monogenic semigroup $\M_{r,m}$ has cardinality $|\M_{r,m}|\le 2$. Consequently, $\lambda(\M_{r,m})=\M_{r,m}$ and $\Aut(\lambda(\M_{r,m}))=\Aut(\M_{r,m})\cong C_1$.

\subsection{The semigroups $\lambda(\M_{1,3})$, $\lambda(\M_{2,2})$ and $\lambda(\M_{3,1})$}
For  a monogenic semigroup $\M_{r,m}$ with $m+r=4$ and generator $a$, the superextension $\lambda(\M_{r,m})$  consists of three principal ultrafilters $a,a^2,a^3$
 and the maximal linked family
$\triangle=\langle\{a,a^2\},\{a,a^3\},\{a^2,a^3\}\rangle$.

Taking into account that $\M_{1,3}$ is isomorphic to $C_3$ and $\Aut(\lambda(C_3))\cong C_2$ (see Table~\ref{tab:dedekind_gr}), we conclude that $\Aut(\lambda(\M_{1,3}))\cong\Aut(\lambda(C_3))\cong C_2$.

By Proposition~\ref{mono1}, for any $i\in\{1,2,3\}$  and $\psi\in\Aut(\lambda(\M_{3,1}))$
we have $\psi(a^i)=a^i$. Then  $\psi(\triangle)=\triangle$ and hence $\Aut(\lambda(\M_{3,1}))\cong C_1$.

To recognize the automorphism group of $\lambda(\M_{2,2})$, consider the homomorphic
retraction $$\bar\rho:\lambda(\M_{2,2})\to\lambda(\M_{2,2}^{\cdot2})=\{a^2,a^3\},\;\;\bar\rho:\A\mapsto a^2\A$$and
observe that it has fibers: $\bar\rho^{-1}(a^2)=\{a^2\}$ and $\bar\rho^{-1}(a^3)=\{a,a^3,\triangle\}$.
Since the automorphism group $\Aut(\lambda(\M_{2,2}^{\cdot 2}))\cong\Aut(\lambda (C_2))$ is trivial, we can apply Theorem~\ref{t:r=2}
and conclude that the automorphism group $\Aut(\lambda(\M_{2,2}))$ is isomorphic to $S_{\bar\rho^{-1}(a^3)\setminus\{a^3\}}=S_{\{a,\triangle\}}\cong C_2$.

\subsection{The semigroup $\lambda(\M_{1,4})$}
The semigroup $M_{1,4}$ is isomorphic to the cyclic group $C_4$. Therefore, $$\Aut(\lambda(\M_{1,4}))\cong\Aut(\lambda(C_4))\cong C_2\times C_2,$$
according to \cite{BG9} (see also Table~\ref{tab:dedekind_gr}).

\subsection{The semigroup $\lambda(\M_{2,3})$}\label{ss:M23}
Consider the semigroup $\M_{2,3}=\{a,a^2,a^3,a^4\ |\ a^5=a^2\}$ generated by the element $a$. Its superextension $\lambda(\M_{2,3})$ contains $12$ elements of the form:
$$a^i,\;\;\triangle_i=\langle A:A\subset \M_{2,3}\setminus\{a^i\},\;|A|=2\rangle,\mbox{ and }\;\;
\square_i=\langle\M_{2,3}\setminus\{a^i\},\;\{a^i,x\}:x\in\M_{2,3}\setminus\{a^i\}\rangle$$
where $i\in\{1,2,3,4\}$.

Let $e=a^3$ be the idempotent of the subgroup $C_3$ and $\rho:\M_{2,3}\to C_3$, $\rho:x\mapsto ex$,
be the homomorphic retraction of $\M_{2,3}$ onto $C_3$. The map $\rho$ induces a homomorphic
retraction $\bar\rho:\lambda(\M_{2,3})\to\lambda(C_3)=\{a^2,a^3,a^4,\triangle_1\}$, defined by $\bar\rho(\A)=a^3*\A$ for $\A\in\lambda(\M_{2,3})$.
By routine calculations we can show that the map $\bar\rho$ has fibers:
\begin{itemize}
\item $\bar\rho^{-1}(a^2)=\{a^2\}$;
\item $\bar\rho^{-1}(a^3)=\{a^3\}$;
\item $\bar\rho^{-1}(a^4)=\{a,\;a^4,\triangle_2,\triangle_3,\square_1,\square_4\}$;
\item $\bar\rho^{-1}(\triangle_1)=\{\triangle_1,\;\triangle_4,\square_2,\square_3\}$.
\end{itemize}

Applying Theorem~\ref{t:r=2}, we can show that the automorphism group $\Aut(\lambda(\M_{2,3}))$ of $\lambda(\M_{2,3})$ is isomorphic to
$$S_{(\bar\rho)^{-1}(\triangle_1)\setminus\{\triangle_1\}}\times S_{(\bar\rho)^{-1}(a^4)\setminus\{a^4\}}\cong S_3\times S_5.$$

\subsection{The semigroup $\lambda(\M_{3,2})$}

In this section we consider the monogenic semigroup $\M_{3,2}=\{a,a^2,a^3,a^4\}$ generated by an element $a$ such that $a^5=a^3$. The semigroup $\M_{3,2}$ contains the characteristic ideals $\M_{3,2}^{\cdot 2}=\{a^2,a^3,a^4\}$ and $\M_{3,2}^{\cdot3}=\{a^3,a^4\}=C_2$.

Using the notations from the preceding subsection, we can see that the superextensions of the semigroups $\M_{3,2}^{\cdot k}$ contain the following elements:
\begin{itemize}
\item $\lambda(\M_{3,2})=\{a^i,\triangle_i,\square_i:1\le i\le 4\}$;
\item $\lambda(\M_{3,2}^{\cdot 2})=\{a^2,a^3,a^4,\triangle_1\}$;
\item $\lambda(\M_{3,2}^{\cdot3})=\lambda(C_2)=C_2=\{a^3,a^4\}$.
\end{itemize}

By Lemma~\ref{l:auto-shift}, the map $\bar\sigma:\lambda(\M_{3,2})\to\lambda(\M_{3,2}^{\cdot2})$, \ $\bar\sigma:\A\mapsto a*\A$, is an auto-shift of the semigroup $\lambda(\M_{3,2})$.

By  routine calculations it can be shown that the map $\bar\sigma$ has the following  fibers:
\begin{itemize}
\item $\bar\sigma^{-1}(a^2)=\{a\}$;
\item $\bar\sigma^{-1}(a^3)=\{a^2,a^4,\triangle_1,\triangle_3,\square_2,\square_4\}$;
\item $\bar\sigma^{-1}(a^4)=\{a^3\}$;
\item $\bar\sigma^{-1}(\triangle_1)=\{\triangle_2,\triangle_4,\square_1,\square_3\}$.
\end{itemize}

By Corollary~\ref{c:good}, the restriction $\bar\sigma_2=\bar\sigma|\lambda(\M_{3,2}^{\cdot 2})$ is a good shift of the
semigroup $\lambda(\M_{3,2}^{\cdot 2})$. This shift has a unique fiber $\bar\sigma_2^{-1}(a^3)=\{a^4,a^2,\triangle_1\}$
containing more than one point. By Proposition~\ref{p:kernel}, the automorphism group $\Aut(\lambda(\M_{3,2}^{\cdot2}))$ contains
the permutation group $S_{\{a^2,\triangle_1\}}\cong C_2$. Taking into account that $C_2$ is a characteristic subgroup
of $\lambda(\M_{3,2}^{\cdot2})$ with trivial automorphism group, we conclude that $\Aut(\lambda(\M_{3,2}^{\cdot2}))=S_{\{a^2,\triangle_1\}}\cong C_2$.

By Corollary~\ref{c:Ker}, the restriction operator $R:\Aut(\lambda(\M_{3,2}))\to\Aut(\lambda(\M_{3,2}^{\cdot2}))\cong C_2$ has kernel isomorphic to
$$S_{\bar\sigma^{-1}(a^3)\setminus\lambda(\M_{3,2}^{\cdot2})}\times S_{\bar\sigma^{-1}(\triangle_1)\setminus\lambda(\M_{3,2}^{\cdot2})}\cong S_3\times S_4.$$
We claim that the subgroup $R(\Aut(\lambda(\M_{3,2})))$ of $\Aut(\lambda(\M_{3,2}^{\cdot2}))=S_{\{a^2,\triangle_1\}}$ is trivial.
Indeed, each automorphism $\varphi\in R(\Aut(\lambda(\M_{3,2})))$ is trivial on the subgroup
$C_2=\{a^3,a^4\}$, so $\varphi(a^2)\in\{a^2,\triangle_1\}$. By Corollary~\ref{c:Ker},
$$1=|\bar\sigma^{-1}(a^2)|=|\bar\sigma^{-1}(\varphi(a^2))|<4=|\bar\sigma^{-1}
(\triangle_1)|,$$ which implies that $\varphi(a^2)=a^2$ and $\varphi$ is the identity automorphism of $\Aut(\lambda(\M_{3,2}^{\cdot2}))$.
Consequently, the range of the restriction operator $R$ is trivial and the group $\Aut(\lambda(\M_{3,2}))$
coincides with the kernel of $R$ and hence is isomorphic to $S_3\times S_4$.

\subsection{The semigroup $\lambda(\M_{4,1})$}

In this subsection we consider the monogenic semigroup $\M_{4,1}=\{a,a^2,a^3,a^4\}$ generated by an element $a$ such that $a^5=a^4$.
The semigroup $\M_{4,1}$ contains the characteristic ideals $\M_{4,1}^{\cdot 2}=\{a^2,a^3,a^4\}$,  $\M_{4,1}^{\cdot3}=\{a^3,a^4\}$, and $\M_{4,1}^{\cdot 4}=\{a^4\}$.

Using the notations from the preceding subsection, we can see that the superextensions of the semigroups $\M_{4,1}^{\cdot k}$ contain the following elements:
\begin{itemize}
\item $\lambda(\M_{4,1})=\{a^i,\triangle_i,\square_i:1\le i\le 4\}$;
\item $\lambda(\M_{4,1}^{\cdot 2})=\{a^2,a^3,a^4,\triangle_1\}$;
\item $\lambda(\M_{4,1}^{\cdot3})=\M_{4,1}^{\cdot 3}=\{a^3,a^4\}$;
\item $\lambda(\M_{4,1}^{\cdot4})=\M_{4,1}^{\cdot4}=\{a^4\}$.
\end{itemize}
It is clear that the semigroups $\lambda(\M_{4,1}^{\cdot4})$ and $\lambda(\M_{4,1}^{\cdot3})$ have trivial automorphism groups.

Taking into account that $xy=a^4$ for any $x,y\in\lambda(\M_{4,1}^{\cdot2})=\{a^2,a^3,a^4,\triangle_1\}$, we conclude that the automorphism group $\Aut(\M_{4,1}^{\cdot2})$ of the semigroup $\M_{4,1}^{\cdot 2}$ is isomorphic to the symmetric group $S_3$.

By Lemma~\ref{l:auto-shift}, the map $$\bar\sigma:\lambda(\M_{4,1})\to\lambda(\M_{4,1}^{\cdot2}),\;\;\bar\sigma:\A\mapsto a*\A$$
is an auto-shift of the semigroup $\lambda(\M_{4,1})$.
By  routine calculations it can be shown that the map $\bar\sigma$ has the following fibers:
\begin{itemize}
\item $\bar\sigma^{-1}(a^2)=\{a\}$;
\item $\bar\sigma^{-1}(a^3)=\{a^2\}$;
\item $\bar\sigma^{-1}(a^4)=\{a^3,a^4,\triangle_1,\triangle_2,\square_3,\square_4\}$;
\item $\bar\sigma^{-1}(\triangle_1)=\{\triangle_3,\triangle_4,\square_1,\square_2\}$.
\end{itemize}

By Corollary~\ref{c:Ker}, the restriction operator $R:\Aut(\lambda(\M_{4,1}))\to\Aut(\lambda(\M_{4,1}^{\cdot2}))$ has kernel isomorphic to
$$S_{\bar\sigma^{-1}(a^4)\setminus\lambda(\M_{4,1}^{\cdot2})}\times S_{\bar\sigma^{-1}(\triangle_1)\setminus\lambda(\M_{4,1}^{\cdot2})}=S_{\{\triangle_2,\square_3,\square_4\}}\times S_{\{\triangle_3,\triangle_4,\square_1,\square_2\}}\cong S_3\times S_4.$$
By Proposition~\ref{mono1}, for any automorphism $\psi$ of $\lambda(\M_{4,1})$,
we get $\psi(a^k)=a^k$ for any $k\in\IN$. Taking into account that $\lambda(\M_{4,1}^{\cdot2})=\{a^2,a^3,a^4,\triangle_1\}$ is
a characteristic ideal in $\lambda(\M_{4,1})$, we conclude that $\psi(\triangle_1)=\triangle_1$.
Consequently, the range of the restriction operator $R$ is trivial and the group $\Aut(\lambda(\M_{4,1}))$
coincides with the kernel of $R$ and hence is isomorphic to $S_3\times S_4$.

\subsection{The semigroup $\lambda(\M_{1,5})$} The monogenic semigroup $\M_{1,5}$ is isomorphic to the cyclic group $C_5$.
It was proved in~\cite{BG9} that the automorphism group $\Aut(\lambda(\M_{1,5}))\cong\Aut(\lambda(C_5))$ is isomorphic to $C_4$.

\subsection{The semigroup $\lambda(\M_{2,4})$}\label{ss:M24} To describe the structure of the automorphism group $\Aut(\lambda(\M_{2,4}))$, we shall apply Theorem~\ref{t:r=2}.

Consider the homomorphic retraction
$$\bar\rho:\lambda(\M_{2,4})\to\lambda(\M_{2,4}^{\cdot 2}),\;\bar\rho:\mathcal L\mapsto e*\mathcal L,$$ where $e=a^4\in C_4\subset\M_{2,4}$ is the unique idempotent of the semigroup $\M_{2,4}$.

To describe the fibers of the retraction $\bar\rho$ we first introduce a convenient notation for all
81 elements of the superextension $\lambda(\M_{2,4})$. We recall that $\M_{2,4}=\{a,a^2,a^3,a^4,a^5\}$ and $C_4=\{a^2,a^3,a^4,a^5\}$ where $a^6=a^2$.

The 81 elements of the semigroup $\lambda(\M_{2,4})$ will be denoted as follows:
\begin{itemize}
\item $a^i$ for $i\in\{1,2,3,4,5\}$;
\item $\bigcirc:=\{A\subset \M_{2,4}:|A|\ge3\}$;
\item $\Theta_{ij}:=\langle\{a^i,a^j\},A:A\subset \M_{2,4},\;|A|=3,\;|A\cap\{a^i,a^j\}|=1\}\rangle$ for $1\le i<j\le 5$;
\item $\triangle_{ijk}:=\langle \{a^i,a^j\},\{a^i,a^k\},\{a^j,a^k\}\rangle$ for numbers $1\le i<j<k\le 5$;
\item $\Lambda^i:=\langle \M_{2,4}\setminus\{a^i\}, \{a^i,x\}:x\in \M_{2,4}\setminus\{a^i\}\rangle$ for $i\in\{1,2,3,4,5\}$;
\item $\diamondsuit^{n}_{ijk}:=\langle\{a^n,a^i\},\{a^n,a^j\},\{a^n,a^k\},\{a^i,a^j,a^k\}
    \rangle$ for numbers $1\le i<j<k\le 5$ and $n\in\{1,2,3,4,5\}\setminus \{i,j,k\}$;
\item $\Lambda^i_{jk}:=\big\langle\{a^i,a^j\},\{a^i,a^k\},A:A\subset \M_{2,4},\;|A|=3,\;A\cap\{a^i,a^j,a^k\}\in\big\{\{a^i\},\{a^j,a^k\}\big\}\big\rangle$ for $1\le j<k\le 5$ and $i\in \{1,2,3,4,5\}\setminus \{j,k\}$.
\end{itemize}
Observe that for the subgroup $C_4=\{a^2,a^3,a^4,a^5\}\subset\M_{2,4}$ we have
$$\lambda(C_4)=C_4\cup\{\triangle_{234},\triangle_{235},\triangle_{245},\triangle_{345},\diamondsuit^{2}_{345},\diamondsuit^3_{245},\diamondsuit^4_{235},\diamondsuit^5_{234}\}.$$
By routine calculations it can be shown that the map $\bar\rho:\lambda(\M_{2,4})\to\lambda(C_4)$ has the following fibers:
\begin{itemize}
\item $(\bar\rho)^{-1}(x)=\{x\}$ for $x\in\{a^2,a^3,a^4,\triangle_{234}\}$;
\item $(\bar\rho)^{-1}(a^5)=\{a^5,\;a,\triangle_{125},\triangle_{135},\triangle_{145},\diamondsuit^1_{235},
\diamondsuit^1_{245},\diamondsuit^1_{345},\diamondsuit^5_{123},\diamondsuit^5_{124}, \diamondsuit^5_{134},\Lambda^1,\Lambda^5,\Theta_{15},\Lambda^1_{25}, \Lambda^1_{35}, \Lambda^1_{45}, \Lambda^5_{12},\Lambda^5_{13},\Lambda^5_{14}\}$;
\item $(\bar\rho)^{-1}(\triangle_{235})=\{\triangle_{235},\;\triangle_{123},
    \diamondsuit^2_{135},\diamondsuit^3_{125},
    \Theta_{23},\Lambda^2_{13},\Lambda^2_{35},\Lambda^3_{12}, \Lambda^3_{25}\}$;
\item $(\bar\rho)^{-1}(\triangle_{245})=\{\triangle_{245},\;\triangle_{124},\diamondsuit^2_{145},
\diamondsuit^4_{125},\Theta_{24},\Lambda^2_{14},\Lambda^2_{45}, \Lambda^4_{12},\Lambda^4_{25}\}$;
\item $(\bar\rho)^{-1}(\triangle_{345})=\{\triangle_{345},\;\triangle_{134},\diamondsuit^3_{145},\diamondsuit^4_{135},\Theta_{34},\Lambda^3_{14},\Lambda^3_{45},\Lambda^4_{13}, \Lambda^4_{35}\}$;
\item $(\bar\rho)^{-1}(\diamondsuit^2_{345})=\{\diamondsuit^2_{345},\;\diamondsuit^2_{134}, \Lambda^2, \Lambda^2_{34}\}$;
\item $(\bar\rho)^{-1}(\diamondsuit^3_{245})=\{\diamondsuit^3_{245},\;\diamondsuit^3_{124},\Lambda^3, \Lambda^3_{24}\}$;
\item $(\bar\rho)^{-1}(\diamondsuit^4_{235})=\{\diamondsuit^4_{235},\;\diamondsuit^4_{123},\Lambda^4,
    \Lambda^4_{23}\}$;
\item $(\bar\rho)^{-1}(\diamondsuit^5_{234})=\{\diamondsuit^5_{234},\;\diamondsuit^1_{234}, \Lambda^1_{23},\Lambda^1_{24},\Lambda^1_{34},\Lambda^2_{15}, \Lambda^3_{15},\Lambda^4_{15},\Lambda^5_{23},\Lambda^5_{24},\Lambda^5_{34},
\Theta_{12},\Theta_{13},\Theta_{14},\Theta_{25},\Theta_{35},
\Theta_{45},\bigcirc\}$.
\end{itemize}

By Theorem~\ref{t:r=2}, for any automorphism $\psi$ of $\lambda(M_{2,4})$ the restriction $\psi|\lambda(C_4)$ is
an automorphism of $\lambda(C_4)$ such that $\psi(x)=x$ for all $x\in C_4$. The description of
the automorphism group of $\lambda(C_4)$ given in~\cite{BG9} ensures that $\psi(\diamondsuit^{n}_{ijk})=\diamondsuit^n_{ijk}$
for any $\diamondsuit^n_{ijk}\in\lambda(C_4)$. Taking into account that
$|(\bar\rho)^{-1}(\triangle_{234})|=1<|(\bar\rho)^{-1}(\triangle_{ijk})|$ for
any $\triangle_{ijk}\in\lambda(C_4)\setminus\{\triangle_{234}\}$, we conclude that $\psi(\triangle_{234})=\psi(\triangle_{234})$
and hence $\psi(x\triangle_{234})=x\triangle_{234}$ for any $x\in C_4$. Since $\{x\triangle_{234}:x\in C_4\}=\{\triangle_{ijk}:2\le i<j<k\le 5\}$,
we conclude that $\psi|\lambda(C_4)$ is the identity automorphism of the semigroup $\lambda(C_4)$.

By Theorem~\ref{t:r=2}, the automorphism group $\Aut(\lambda(M_{2,4}))$ is isomorphic
to $$\prod_{\mathcal L\in\lambda(C_4)}S_{(\bar\rho)^{-1}(\mathcal L)\setminus\{\mathcal L\}}\cong S_3^3\times S_8^3\times S_{17}\times S_{19}.$$

\subsection{The semigroup $\lambda(\M_{3,3})$}

In this section we recognize the automorphism group of the superextension of the semigroup $\M_{3,3}=\{a,a^2,a^3,a^4,a^5\}$.
This semigroup is generated by an element $a$ such that $a^6=a^3$. Observe that the ideal $\M_{3,3}^{\cdot2}=\{a^2,a^3,a^4,a^5\}$
is isomorphic to the monogenic semigroup $\M_{2,3}=\{b,b^2,b^3,b^4\}$ as for the element $b=a^2$ we get $\{b,b^2,b^3,b^4\}=\{a^2,a^4,a^3,a^5\}$.

As shown in Subsection~\ref{ss:M23}, the automorphism group $\Aut(\lambda(\M_{2,3}))$ consists of all bijections
$\psi$ of $\lambda(\M_{2,3})$ such that
\begin{itemize}
\item $\psi(x)=x$ for any $x\notin\{\triangle_4,\square_2,\square_3\}\cup\{b,\triangle_2,\triangle_3,\square_1,\square_4\}$;
\item the sets $\{\triangle_4,\square_2,\square_3\}$ and $\{b,\triangle_2,\triangle_3,\square_1,\square_4\}$ are $\psi$-invariant.
 \end{itemize}
Taking into account the isomorphism $\{b,b^2,b^3,b^4\}\to\{a^2,a^4,a^3,a^5\}$ between the semigroups $\M_{2,3}$ and $\M_{3,3}^{\cdot2}$, we obtain the following fact.

\begin{claim}\label{cl:M33} The automorphism group of the semigroup $\lambda(\M_{3,3}^{\cdot2})$ consists of bijections $\psi$ of $\lambda(\M_{3,3}^{\cdot2})$ such that
\begin{itemize}
\item $\psi(x)=x$ for any $x\not\in\{\triangle_{234},\diamondsuit^4_{235},\diamondsuit^3_{245}\}\cup \{a^2,\triangle_{235},\triangle_{245},\diamondsuit^2_{345},\diamondsuit^5_{234}\}$;
\item the sets $\{\triangle_{234},\diamondsuit^4_{235},\diamondsuit^3_{245}\}$ and
$\{a^2,\triangle_{235},\triangle_{245},\diamondsuit^2_{345},\diamondsuit^5_{234}\}$ are $\psi$-invariant.
\end{itemize}
\end{claim}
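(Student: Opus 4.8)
The plan is to deduce the claim from the description of $\Aut(\lambda(\M_{2,3}))$ obtained in Subsection~\ref{ss:M23} by transporting it along the isomorphism $\phi\colon\M_{2,3}\to\M_{3,3}^{\cdot2}$, $b\mapsto a^2$, noted just before the claim. Writing $\phi$ out on points, it sends $b\mapsto a^2$, $b^2\mapsto a^4$, $b^3\mapsto a^3$, $b^4\mapsto a^5$ (using $a^6=a^3$). By the functoriality of the superextension recalled in the introduction, $\phi$ induces a \emph{semigroup isomorphism} $\lambda\phi\colon\lambda(\M_{2,3})\to\lambda(\M_{3,3}^{\cdot2})$, so that conjugation by $\lambda\phi$,
$$\psi\mapsto \lambda\phi\circ\psi\circ(\lambda\phi)^{-1},$$
is a group isomorphism $\Aut(\lambda(\M_{2,3}))\to\Aut(\lambda(\M_{3,3}^{\cdot2}))$. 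Consequently an element of $\lambda(\M_{3,3}^{\cdot2})$ is moved by some automorphism precisely when its $\lambda\phi$-preimage is moved by some automorphism of $\lambda(\M_{2,3})$, and the invariant sets of $\lambda(\M_{3,3}^{\cdot2})$ are exactly the $\lambda\phi$-images of the invariant sets of $\lambda(\M_{2,3})$.

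It therefore remains to compute $\lambda\phi$ on the named maximal linked upfamilies and match the results against the $\triangle_{ijk},\diamondsuit^n_{ijk}$ notation used for $\lambda(\M_{3,3}^{\cdot2})$. Since $\lambda\phi(\mathcal L)=\langle\phi(L):L\in\mathcal L\rangle$ carries a generated family to the family generated by the $\phi$-images of its generators, the triangle $\triangle_i$ on the three points $\{b,b^2,b^3,b^4\}\setminus\{b^i\}$ goes to the triangle on $\phi(\{b,b^2,b^3,b^4\}\setminus\{b^i\})$, while the upfamily $\square_i=\langle\M_{2,3}\setminus\{b^i\},\,\{b^i,x\}\rangle$, being generated by the triple avoiding $b^i$ together with all pairs through $b^i$, goes to the diamond whose distinguished point is $\phi(b^i)$ and whose triple is the complementary one. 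Running through $i\in\{1,2,3,4\}$ one gets $\triangle_1\mapsto\triangle_{345}$, $\triangle_2\mapsto\triangle_{235}$, $\triangle_3\mapsto\triangle_{245}$, $\triangle_4\mapsto\triangle_{234}$ and $\square_1\mapsto\diamondsuit^2_{345}$, $\square_2\mapsto\diamondsuit^4_{235}$, $\square_3\mapsto\diamondsuit^3_{245}$, $\square_4\mapsto\diamondsuit^5_{234}$, together with $b\mapsto a^2$.

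Feeding these images into the two invariant sets from Subsection~\ref{ss:M23}, the triple $\{\triangle_4,\square_2,\square_3\}$ maps to $\{\triangle_{234},\diamondsuit^4_{235},\diamondsuit^3_{245}\}$ and the quintuple $\{b,\triangle_2,\triangle_3,\square_1,\square_4\}$ maps to $\{a^2,\triangle_{235},\triangle_{245},\diamondsuit^2_{345},\diamondsuit^5_{234}\}$, which are precisely the two sets named in the claim; every remaining element of $\lambda(\M_{3,3}^{\cdot2})$ is the $\lambda\phi$-image of a point fixed by all automorphisms of $\lambda(\M_{2,3})$, hence is fixed by all automorphisms of $\lambda(\M_{3,3}^{\cdot2})$. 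This gives the claim. The only delicate point is the bookkeeping: one must track the nontrivial index permutation $b^i\mapsto a^{?}$ induced by $\phi$ and check that each $\square_i$ really lands on a $\diamondsuit$ rather than on some other element of $\lambda(\M_{3,3}^{\cdot2})$. I expect this to cause no real trouble, since $\lambda\phi$ preserves the combinatorial type of each upfamily (the cardinalities of its generating members and its linkedness pattern), so the matching is forced by the underlying point permutation alone.
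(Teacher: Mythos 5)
Your proposal is correct and is essentially the paper's own argument: the paper obtains the Claim precisely by transporting the description of $\Aut(\lambda(\M_{2,3}))$ from Subsection~\ref{ss:M23} along the isomorphism $b\mapsto a^2$ (so $b^2\mapsto a^4$, $b^3\mapsto a^3$, $b^4\mapsto a^5$), merely stating the result without writing out the bookkeeping. Your explicit computation of $\lambda\phi$ on the $\triangle_i$ and $\square_i$ and the resulting identification of the two invariant sets matches the paper's conclusion exactly.
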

By Lemma~\ref{l:auto-shift}, the map $\bar\sigma:\lambda(\M_{3,3})\to\lambda(\M_{3,3}^{\cdot2})$, $\bar \sigma:\A\mapsto a*\A$,
is an auto-shift of the semigroup $\lambda(\M_{3,3})$.
By  routine calculations, it can be shown that the map $\bar\sigma$ has the following fibers:
\begin{itemize}
\item $\bar\sigma^{-1}(a^2)=\{a\}$, $\bar\sigma^{-1}(a^4)=\{a^3\}$, $\bar\sigma^{-1}(a^5)=\{a^4\}$;
\item $\bar\sigma^{-1}(a^3)=\{a^2,a^5,\triangle_{125},\triangle_{235},\triangle_{245},\Lambda^2_{15},\Lambda^2_{35},
\Lambda^2_{45},\Lambda^5_{12},\Lambda^5_{23},\Lambda^5_{24},\diamondsuit^2_{135}, \diamondsuit^2_{145}, \diamondsuit^2_{345},
\diamondsuit^5_{123},\diamondsuit^5_{124}, \diamondsuit^5_{234},\Theta_{25},\Lambda^2,\Lambda^5\}$;
\item $\bar\sigma^{-1}(\triangle_{245})=\{\triangle_{134}\}$;
\item $\bar\sigma^{-1}(\triangle_{234})=\{\triangle_{123},\triangle_{135},\Lambda^1_{23},\Lambda^1_{35},\Lambda^3_{12},\Lambda^3_{15},
\diamondsuit^1_{235},\diamondsuit^3_{125}, \Theta_{13}\}$;
\item $\bar\sigma^{-1}(\triangle_{235})=\{\triangle_{124},\triangle_{145}, \Lambda^1_{24}, \Lambda^1_{45}, \Lambda^4_{12},\Lambda^4_{15},  \diamondsuit^1_{245},
\diamondsuit^4_{125}, \Theta_{14}\}$;
\item $\bar\sigma^{-1}(\triangle_{345})=\{\triangle_{234},\triangle_{345},\Lambda^3_{24},\Lambda^3_{45},\Lambda^4_{23},
\Lambda^4_{35},\diamondsuit^3_{245},\diamondsuit^4_{235},\Theta_{34}\}$;
\item $\bar\sigma^{-1}(\diamondsuit^2_{345})= \{\Lambda^1_{34}, \diamondsuit^1_{234}, \diamondsuit^1_{345},\Lambda^1\}$;
\item $\bar\sigma^{-1}(\diamondsuit^4_{235})=\{\Lambda^3_{14},\diamondsuit^3_{124}, \diamondsuit^3_{145},\Lambda^3\}$;
\item $\bar\sigma^{-1}(\diamondsuit^5_{234})=\{\Lambda^4_{13},\diamondsuit^4_{123}, \diamondsuit^4_{135},\Lambda^4\}$;
\item $\bar\sigma^{-1}(\diamondsuit^3_{245})=\{\Lambda^1_{25},\Lambda^2_{13},\Lambda^2_{14},\Lambda^2_{34},\Lambda^3_{25},
\Lambda^4_{25},\Lambda^5_{13}, \Lambda^5_{14}, \Lambda^5_{34},  \diamondsuit^5_{134}, \diamondsuit^2_{134}, \Theta_{12},\Theta_{15},\Theta_{23}, \Theta_{24},
\Theta_{35}, \Theta_{45}, \bigcirc\}$.
\end{itemize}

Then
\begin{itemize}
\item $\bar\sigma^{-1}(x)\setminus\lambda(\M_{3,3}^{\cdot2})=\emptyset$ for $x\in\{a^2,a^4,a^5\}$;
\item $\bar\sigma^{-1}(a^3)\setminus\lambda(\M_{3,3}^{\cdot2})=
    \{\triangle_{125},\Lambda^2_{15},\Lambda^2_{35}, \Lambda^2_{45},\Lambda^5_{12},\Lambda^5_{23},\Lambda^5_{24},\diamondsuit^2_{135},
    \diamondsuit^2_{145},\diamondsuit^5_{123},\diamondsuit^5_{124},\Theta_{25},\Lambda^2,\Lambda^5\}$;
\item $\bar\sigma^{-1}(\triangle_{245})\setminus\lambda(\M_{3,3}^{\cdot2})=\{\triangle_{134}\}$;
\item $\bar\sigma^{-1}(\triangle_{234})\setminus\lambda(\M_{3,3}^{\cdot2})=\{\triangle_{123},\triangle_{135},\Lambda^1_{23},\Lambda^1_{35},\Lambda^3_{12},\Lambda^3_{15},
\diamondsuit^1_{235},\diamondsuit^3_{125}, \Theta_{13}\}$;
\item $\bar\sigma^{-1}(\triangle_{235})\setminus\lambda(\M_{3,3}^{\cdot2})=\{\triangle_{124},\triangle_{145}, \Lambda^1_{24}, \Lambda^1_{45}, \Lambda^4_{12},\Lambda^4_{15}, \diamondsuit^1_{245},
\diamondsuit^4_{125}, \Theta_{14}\}$;
\item $\bar\sigma^{-1}(\triangle_{345})\setminus\lambda(\M_{3,3}^{\cdot2})=\{\Lambda^3_{24},\Lambda^3_{45},\Lambda^4_{23}, \Lambda^4_{35},\Theta_{34}\}$;
\item $\bar\sigma^{-1}(\diamondsuit^2_{345})\setminus\lambda(\M_{3,3}^{\cdot2})= \{\Lambda^1_{34}, \diamondsuit^1_{234},
\diamondsuit^1_{345},\Lambda^1\}$;
\item $\bar\sigma^{-1}(\diamondsuit^4_{235})\setminus\lambda(\M_{3,3}^{\cdot2})=\{\Lambda^3_{14},\diamondsuit^3_{124}, \diamondsuit^3_{145},\Lambda^3\}$;
\item $\bar\sigma^{-1}(\diamondsuit^5_{234})\setminus\lambda(\M_{3,3}^{\cdot2})=\{\Lambda^4_{13},\diamondsuit^4_{123}, \diamondsuit^4_{135},\Lambda^4\}$;

\item $\bar\sigma^{-1}(\diamondsuit^3_{245})\setminus\lambda(\M_{3,3}^{\cdot2})=
    \{\Lambda^1_{25},\Lambda^2_{13},\Lambda^2_{14},\Lambda^2_{34},\Lambda^3_{25}, \Lambda^4_{25},\Lambda^5_{13}, \Lambda^5_{14}, \Lambda^5_{34},
    \diamondsuit^5_{134}, \diamondsuit^2_{134}, \Theta_{12},\Theta_{15},\Theta_{23}, \Theta_{24}, \Theta_{35}, \Theta_{45}, \bigcirc\}$.
\end{itemize}

By Corollary~\ref{c:Ker}, for any automorphism $\psi$ of $\lambda(\M_{3,3})$ we get $|\bar\sigma^{-1}(\psi(x))|=|\bar\sigma^{-1}(x)|$ for all
$x\in\lambda(\M_{3,3}^{\cdot 2})$. Taking into account Corollary~\ref{c:Ker}, Claim~\ref{cl:M33} and comparing the cardinalities of
the fibers of $\bar\sigma$, we conclude that $\psi$ is the identity on the set $\lambda(\M_{3,3}^{\cdot2})\setminus\{\diamondsuit^2_{345},\diamondsuit^5_{234}\}$.

To see that $\psi|\lambda(\M_{3,3}^{\cdot2})$ is trivial, it is sufficient to show that the assumption $\psi(\diamondsuit^2_{345})=\diamondsuit^5_{234}$ leads to a contradiction. In this case $\psi(\Lambda^1)\in \{\Lambda^4_{13},\diamondsuit^4_{123}, \diamondsuit^4_{135},\Lambda^4\}$.
Replacing $\psi$ by the composition of $\psi$ with a suitable permutation in the kernel of the restriction
operator $R:\Aut(\lambda(\M_{3,3}))\to\Aut(\lambda(\M_{3,3}^{\cdot2}))$,
we can assume that $\psi(\Lambda^1)=\Lambda^4$ and $\psi(\triangle_{123})=\triangle_{123}$.

Now observe that $\triangle_{123}*\Lambda^1=\triangle_{234}\ne \triangle_{345}=\triangle_{123}*\Lambda^4.$
 On the other hand,
$$\triangle_{234}=\psi(\triangle_{234})=\psi(\triangle_{123}*\Lambda^1)=\psi(\triangle_{123})*\psi(\Lambda^1)=
\triangle_{123}*\Lambda^4=\triangle_{345},$$which is a desired
contradiction showing that the restriction $\psi|\lambda(\M_{3,3}^{\cdot2})$ is trivial.

Therefore, the restriction operator $R$ has trivial range and the automorphism
group $\Aut(\lambda(\M_{3,3}))$ coincides with the kernel of $R$, which is isomorphic to
$$\prod_{\mathcal L\in\lambda(\M_{3,3}^{\cdot2})}S_{\bar\sigma^{-1}(\mathcal L)\setminus\lambda(\M_{3,3}^{\cdot2})}\cong
S_4^3\times S_{5}\times S_9^2\times S_{14}\times S_{18}.$$

\subsection{The semigroup $\lambda(\M_{4,2})$}

In this section we recognize the structure of the automorphism group of the superextension of the semigroup $\M_{4,2}$ and its
characteristic ideals $\M_{4,2}^{\cdot k}$ for $k\in\{2,3,4\}$. The monogenic semigroup $\M_{4,2}=\{a,a^2,a^3,a^4,a^5\}$
is generated by an element $a$ such that $a^6=a^4$.
The characteristic ideal $\M_{4,2}^{\cdot4}=\{a^4,a^5\}$ is a group with neutral element $a^4$.

Observe that $\lambda(\M_{4,2}^{\cdot 4})=\M_{4,2}^{\cdot4}\cong C_2$ and hence $\Aut(\lambda(\M_{4,2}^{\cdot4}))\cong\Aut(C_2)\cong C_1$.

To recognize the structure of the automorphism groups of the superextensions of the semigroups $\M_{4,2}^{\cdot3}=\{a^3,a^4,a^5\}$
and $\M_{4,2}^{\cdot2}=\{a^2,a^3,a^4,a^5\}$, observe that the map $\rho:\M_{4,2}^{\cdot 2}\to \M_{4,2}^{\cdot4}$, $\rho:x\mapsto a^4x=a^2x$,
is a homomorphic retraction of $\M_{4,2}^{\cdot 2}$ onto the group $\M_{4,2}^{\cdot4}$ such that $xy=\rho(x)\cdot\rho(y)$ for
all $x,y\in\M_{4,2}^{\cdot2}$. This homomorphic retraction induces a homomorphic retraction
$$\bar \rho:\lambda(\M_{4,2}^{\cdot 2})\to\lambda(\M_{4,2}^{\cdot4}),\;\;\bar\rho:\A\mapsto a^4*\A,$$
such that $\A*\mathcal B=\bar\rho(\A)*\bar\rho(\mathcal B)$ for any $\A,\mathcal B\in\lambda(\M_{4,2}^{\cdot2})$.

Using the notations for the elements of the superextension $\lambda(\M_{4,2})$ from Subsection~\ref{ss:M24}, observe that $$\lambda(\M_{4,2}^{\cdot2})=\{a^i:2\le i\le 5\}\cup\{\triangle_{ijk}:2\le i<j<k\le 5\}\cup\big\{\diamondsuit^n_{ijk}:2\le i<j<k\le 5,\;n\in\{2,3,4,5\}\setminus\{i,j,k\}\big\}.$$
By routine calculations it can be shown that the map $\bar\rho:\lambda(\M_{4,2}^{\cdot2})\to\lambda(\M_{4,2}^{\cdot4})=\{a^4,a^5\}$ has the following fibers:
\begin{itemize}
\item $\bar\rho^{-1}(a^4)=\{a^2,a^4,\triangle_{234},\triangle_{245}, \diamondsuit^2_{345},\diamondsuit^4_{235}\}$;
\item $\bar\rho^{-1}(a^5)=\{a^3,a^5,\triangle_{235},\triangle_{345}, \diamondsuit^3_{245},\diamondsuit^5_{234}\}$.
\end{itemize}
Now we see that $\Aut(\lambda(\M_{4,2}^{\cdot3}))$ is isomorphic to $$\prod_{i\in\{4,5\}}S_{\bar\rho^{-1}(a^i)\cap\lambda(\M_{4,2}^{\cdot3}) \setminus\M_{4,2}^{\cdot4}}=S_{\{a^3,\triangle_{345}\}}\cong C_2$$
and $\Aut(\lambda(\M_{4,2}^{\cdot2}))$ is isomorphic to
$$\prod_{i\in\{4,5\}}S_{\bar\rho^{-1}(a^i)\setminus\M_{4,2}^{\cdot4}}\cong
S_5\times S_5.$$

To detect the algebraic structure of the automorphism group of $\lambda(\M_{4,2})$, consider the map $\sigma:\M_{4,2}\to\M_{4,2}^{\cdot2}$, $\sigma:x\mapsto ax$, which induces the map
$$\bar\sigma:\lambda(\M_{4,2})\to\lambda(\M_{4,2}^{\cdot2}),\;\;\bar\sigma:\A\mapsto a*\A.$$
By Lemma~\ref{l:auto-shift}, $\bar\sigma$ is an auto-shift of $\lambda(\M_{4,2})$.

By routine calculations we can show that the map $\bar\sigma$ has the following fibers:
\begin{itemize}
\item $\bar\sigma^{-1}(a^2)=\{a\}$, $\bar\sigma^{-1}(a^3)=\{a^2\}$, $\bar\sigma^{-1}(a^5)=\{a^4\}$;
\item $\bar\sigma^{-1}(a^4)=\{a^3,a^5,\triangle_{135},\triangle_{235}, \triangle_{345},\Lambda^3_{15},\Lambda^3_{25}, \Lambda^3_{45},
\Lambda^5_{13}, \Lambda^5_{23}, \Lambda^5_{34},\diamondsuit^3_{125}, \diamondsuit^3_{145}, \diamondsuit^3_{245},\diamondsuit^5_{123},
\diamondsuit^5_{134}, \diamondsuit^5_{234}, \Lambda^3,\Lambda^5, \Theta_{35}\}$;
\item $\bar\sigma^{-1}(\triangle_{235})=\{\triangle_{124}\}$;
\item $\bar\sigma^{-1}(\triangle_{234})=\{\triangle_{123},\triangle_{125}, \Lambda^1_{23}, \Lambda^1_{25}, \Lambda^2_{13},\Lambda^2_{15}, \diamondsuit^1_{235}, \diamondsuit^2_{135}, \Theta_{12}\}$;
\item $\bar\sigma^{-1}(\triangle_{245})=\{\triangle_{134},\triangle_{145}, \Lambda^1_{34},\Lambda^1_{45},
\Lambda^4_{13},\Lambda^4_{15}, \diamondsuit^1_{345}, \diamondsuit^4_{135},\Theta_{14}\}$;
\item $\bar\sigma^{-1}(\triangle_{345})=\{\triangle_{234},\triangle_{245}, \Lambda^2_{34}, \Lambda^2_{45}, \Lambda^4_{23}, \Lambda^4_{25},
\diamondsuit^2_{345}, \diamondsuit^4_{235},\Theta_{24}\}$;
\item $\bar\sigma^{-1}(\diamondsuit^2_{345}) = \{\diamondsuit^1_{234},\diamondsuit^1_{245},\Lambda^1, \Lambda^1_{24}\}$;
\item $\bar\sigma^{-1}(\diamondsuit^3_{245})= \{\diamondsuit^2_{134}, \diamondsuit^2_{145}, \Lambda^2, \Lambda^2_{14}\}$;
\item $\bar\sigma^{-1}(\diamondsuit^5_{234})= \{\diamondsuit^4_{123},\diamondsuit^4_{125},\Lambda^4, \Lambda^4_{12}\}$;
\item $\bar\sigma^{-1}(\diamondsuit^4_{235})= \{\Lambda^1_{35},\Lambda^2_{35},\Lambda^3_{12},\Lambda^3_{14},
\Lambda^3_{24},\Lambda^4_{35},\Lambda^5_{12},\Lambda^5_{14},\Lambda^5_{24}, \diamondsuit^3_{124},\diamondsuit^5_{124},
\Theta_{13},\Theta_{15},\Theta_{23}, \Theta_{25},\Theta_{34},\Theta_{45},\bigcirc\}$.
\end{itemize}

Now consider the restriction operator $$R:\Aut(\lambda(\M_{4,2}))\to\Aut(\lambda(\M_{4,2}^{\cdot2})),\;\;R:\psi\mapsto\psi|\lambda(\M_{4,2}^{\cdot2}).$$
 We claim that this operator has trivial range.

Fix any automorphism $\psi\in\Aut(\lambda(\M_{4,2}))$. Proposition~\ref{mono1} ensures that $\psi(a^i)=a^i$ for all $i\in\{1,2,3,4,5\}$.

The description of the automorphism group $\Aut(\lambda(\M_{4,2}^{\cdot2}))$ ensures that the sets $\{a^2,a^4,\triangle_{234},\triangle_{245}, \diamondsuit^2_{345},\diamondsuit^4_{235}\}$ and $\{a^3,a^5,\triangle_{235},\triangle_{345}, \diamondsuit^3_{245},\diamondsuit^5_{234}\}$ are $\psi$-invariant.
Then the sets $F_4=\{\triangle_{234},\triangle_{245}, \diamondsuit^2_{345},\diamondsuit^4_{235}\}$
and $F_4'=\{\triangle_{235},\triangle_{345}, \diamondsuit^3_{245},\diamondsuit^5_{234}\}$ are $\psi$-invariant, too.
Taking into account that the ideal $\lambda(\M_{4,2}^{\cdot3})=\{a^3,a^4,a^5,\triangle_{345}\}$ is characteristic,
we conclude that $\psi(\triangle_{345})=\triangle_{345}$. Consequently, the set
$F_3=\{\triangle_{235}, \diamondsuit^3_{245},\diamondsuit^5_{234}\}$ is $\psi$-invariant and $\psi(\A)=\A$ for any
$\A\in\lambda(\M_{4,2}^{\cdot2})\setminus(F_3\cup F_4)$.

It follows from $\psi\circ\bar\sigma=\bar\sigma\circ\psi$ that $$|\bar\sigma^{-1}(\psi^{-1}(\A))\setminus\lambda(\M_{4,2}^{\cdot2})|=
|\bar\sigma^{-1}(\A)\setminus\lambda(\M_{4,2}^{\cdot2})|$$ for any $\A\in\lambda(\M_{4,2}^{\cdot2})$.
Observe that
\begin{itemize}
\item $|\bar\sigma^{-1}(x)\setminus\lambda(\M_{4,2}^{\cdot2})|=0$ for $x\in\{a^2,a^3,a^5\}$;

\item $|\bar\sigma^{-1}(a^4)\setminus\lambda(\M_{4,2}^{\cdot2})|= |\{\triangle_{135},\Lambda^3_{15},\Lambda^3_{25}, \Lambda^3_{45},\Lambda^5_{13},
\Lambda^5_{23}, \Lambda^5_{34},\diamondsuit^3_{125}, \diamondsuit^3_{145}, \diamondsuit^5_{123},\diamondsuit^5_{134}, \Lambda^3,\Lambda^5, \Theta_{35}\}|=14$;
\item $|\bar\sigma^{-1}(\triangle_{235})\setminus\lambda(\M_{4,2}^{\cdot2})|=|\{\triangle_{124}\}|=1$;
\item $|\bar\sigma^{-1}(\triangle_{234})\setminus\lambda(\M_{4,2}^{\cdot2})|=
   |\{\triangle_{123},\triangle_{125}, \Lambda^1_{23}, \Lambda^1_{25}, \Lambda^2_{13},\Lambda^2_{15}, \diamondsuit^1_{235}, \diamondsuit^2_{135}, \Theta_{12}\}|=9$;
\item  $|\bar\sigma^{-1}(\triangle_{245})\setminus\lambda(\M_{4,2}^{\cdot2})|=
|\{\triangle_{134},\triangle_{145}, \Lambda^1_{34},\Lambda^1_{45}, \Lambda^4_{13},\Lambda^4_{15},
\diamondsuit^1_{345},\diamondsuit^4_{135},\Theta_{14}\}|=9$;
\item  $|\bar\sigma^{-1}(\triangle_{345})\setminus\lambda(\M_{4,2}^{\cdot2})|=
|\{\Lambda^2_{34}, \Lambda^2_{45}, \Lambda^4_{23}, \Lambda^4_{25},  \Theta_{24}\}|=5$;
\item $|\bar\sigma^{-1}(\diamondsuit^2_{345})\setminus\lambda(\M_{4,2}^{\cdot2})|=
|\{\diamondsuit^1_{234},\diamondsuit^1_{245},\Lambda^1,\Lambda^1_{24}\}|=4$;
\item $|\bar\sigma^{-1}(\diamondsuit^3_{245})\setminus\lambda(\M_{4,2}^{\cdot2})|=
|\{\diamondsuit^2_{134},\diamondsuit^2_{145}, \Lambda^2,  \Lambda^2_{14}\}|=4$;
\item $|\bar\sigma^{-1}(\diamondsuit^5_{234})\setminus\lambda(\M_{4,2}^{\cdot2})|=
|\{\diamondsuit^4_{123},\diamondsuit^4_{125},\Lambda^4, \Lambda^4_{12}\}|=4$;
\item  $|\bar\sigma^{-1}(\diamondsuit^4_{235})\setminus\lambda(\M_{4,2}^{\cdot2})|=$\newline $=|\{\Lambda^1_{35},\Lambda^2_{35},\Lambda^3_{12},
\Lambda^3_{14}, \Lambda^3_{24},\Lambda^4_{35},\Lambda^5_{12},\Lambda^5_{14},\Lambda^5_{24}, \diamondsuit^3_{124},\diamondsuit^5_{124},
\Theta_{13},\Theta_{15},\Theta_{23}, \Theta_{25},\Theta_{34},\Theta_{45},\bigcirc\}|=18$.

\end{itemize}
Comparing the cardinalities of the sets $\bar\sigma^{-1}(\A)\setminus\lambda(\M_{4,2}^{\cdot2})$ for points in the sets $F_3$ and $F_4$,
we can conclude that the sets $\{\triangle_{234},\triangle_{245}\}$ and $\{\diamondsuit^3_{245},\diamondsuit^5_{234}\}$ are $\psi$-invariant and $\psi(\A)=\A$ for any $\A\in \lambda(\M_{4,2}^{\cdot2})\setminus
\{\triangle_{234},\triangle_{245},\diamondsuit^3_{245},\diamondsuit^5_{234}\}$.

Now we show that $\psi(\triangle_{234})=\triangle_{234}$. In the opposite case, $\psi(\triangle_{234})=\triangle_{245}$ and hence $\psi(\triangle_{123})\in\bar\sigma^{-1}(\triangle_{245})\setminus\lambda(\M_{4,2}^{\cdot2})$.
Replacing $\psi$ by the composition of $\psi$ with a suitable permutation in the kernel of the restriction operator $R:\Aut(\lambda(\M_{4,2}))\to\Aut(\lambda(\M_{4,2}^{\cdot2}))$, we can assume that $\psi(\triangle_{123})=\triangle_{134}$.

Taking into account that $\triangle_{123}*\triangle_{123}=\diamondsuit^4_{235}$ and $\triangle_{134}*\triangle_{134}=\triangle_{245}$, we conclude that
$$\diamondsuit^4_{235}=\psi(\diamondsuit^4_{235})=\psi(\triangle_{123})*
\psi(\triangle_{123})=\triangle_{134}*\triangle_{134}=\triangle_{245},$$which is a contradiction proving that $\psi(\A)=\A$ for all $\A\in \lambda(\M_{4,2}^{\cdot2})\setminus
\{\diamondsuit^3_{245},\diamondsuit^5_{234}\}$.

Assuming that $\psi|\lambda(\M_{4,2}^{\cdot2})$ is not identity, we conclude that $\psi(\diamondsuit^3_{245})=\diamondsuit^5_{234}$. Replacing $\psi$ by the composition of $\psi$ with a suitable permutation in the kernel of the restriction operator $R$, we can assume that $\psi(\Lambda^2)=\Lambda^4$ and $\psi(\triangle_{123})=\triangle_{123}$.

Taking into account that $\triangle_{123}*\Lambda^2=\triangle_{345}$ and $\triangle_{123}*\Lambda^4=\diamondsuit^5_{234}$, we conclude that
$$\triangle_{345}=\psi(\triangle_{345})=\psi(\triangle_{123}*\Lambda^2)= \psi(\triangle_{123})*\psi(\Lambda^2)=\triangle_{123}*\Lambda^4=\diamondsuit^5_{234},$$
which is a contradiction completing the proof of the triviality of the range of the restriction operator $R:\Aut(\lambda(\M_{4,2}))\to\Aut(\lambda(\M_{4,2}^{\cdot2}))$.

Then the automorphism group
$\Aut(\lambda(\M_{4,2}))$ coincides with the kernel of $R$, which is isomorphic to the group
$$\prod_{\mathcal L\in\lambda(\M_{4,2}^{\cdot2})}S_{\bar\sigma^{-1}(\mathcal L)\setminus\lambda(\M_{4,2}^{\cdot2})}\cong
S_4^3\times S_5\times S_9^2\times S_{14}\times S_{18}.$$

\subsection{The semigroup $\lambda(\M_{5,1})$}

In this section we recognize the structure of the automorphism group of the superextension of the semigroup $\M_{5,1}$ and its
characteristic ideals $\M_{5,1}^{\cdot k}$ for $k\in\{2,3,4,5\}$. The monogenic semigroup $\M_{5,1}=\{a,a^2,a^3,a^4,a^5\}$ is
generated by an element $a$ such that $a^6=a^5$.
Observe that for any $x, y\in\{a^3,a^4,a^5\}=\M_{5,1}^{\cdot3}$ we get $xy=a^5$. This implies that for any $k\in\{3,4,5\}$ any
bijection $\psi$ of $\lambda(\M_{5,1}^{\cdot k})$ with $\psi(a^5)=a^5$ is an automorphism of the semigroup $\lambda(\M_{5,1}^{\cdot k})$.
This observation implies that $\Aut(\lambda(\M_{5,1}^{\cdot5}))\cong \Aut(\lambda(\M_{5,1}^{\cdot4}))\cong C_1$ and $\Aut(\lambda(\M_{5,1}^{\cdot 3}))\cong S_3$.

Next, consider the semigroup $\M_{5,1}^{\cdot2}=\{a^2,a^3,a^4,a^5\}$ and its superextension $\lambda(\M_{5,1}^{\cdot2})$. It follows from $\M_{5,1}^{\cdot2}*\M_{5,1}^{\cdot2}=\M_{5,1}^{\cdot 4}=\{a^4,a^5\}$ that $\lambda(\M_{5,1}^{\cdot2})*\lambda(\M_{5,1}^{\cdot2})=\lambda(\M_{5,1}^{\cdot4})=\M_{5,1}^{\cdot4}$.
This implies that the ideal $\M_{5,1}^{\cdot4}$ is characteristic in the semigroup $\lambda(\M_{5,1}^{\cdot2})$. Since the semigroup $\M_{5,1}^{\cdot4}=\{a^4,a^5\}$ has trivial automorphism group, $\psi(a^4)=a^4$ and $\psi(a^5)=a^5$ for any automorphism $\psi$ of $\lambda(\M_{5,1}^{\cdot2})$.

It is easy to show that the equation $x*y=a^4$ has a unique solution $x=y=a^2$ in the semigroup $\lambda(\M_{5,1}^{\cdot2})$.
Consequently, $\psi(a^2)=a^2$ for any automorphism $\psi$ of $\lambda(\M_{5,1}^{\cdot2})$ and $\A*\mathcal B=a^5$ for any maximal
linked upfamilies $\A,\mathcal B\in\lambda(\M_{5,1}^{\cdot2})$ such that $(\A,\mathcal B)\ne(a^2,a^2)$. It follows that any bijection
$\psi$ of $\lambda(\M_{5,1}^{\cdot2})$ such that $\psi(a^i)=a^i$ for $i\in\{2,4,5\}$ is an automorphism of the semigroup
$\lambda(\M_{5,1}^{\cdot 2})$. This implies that $\Aut(\M_{5,1}^{\cdot2})\cong S_9$.

To recognize the automorphism group of $\lambda(\M_{5,1})$, consider
the map $$\bar\sigma:\lambda(\M_{5,1})\to\lambda(\M_{5,1}^{\cdot2}),\;\;\bar\sigma:\A\mapsto a*\A.$$
By Lemma~\ref{l:auto-shift}, $\bar\sigma$ is an auto-shift of $\lambda(\M_{5,1})$ and its restriction
$\bar\sigma_2:=\bar\sigma|\lambda(\M_{5,1}^{\cdot2})$ is a good shift of $\lambda(\M_{5,1}^{\cdot2})$.

By routine calculations, it can be shown that the map $\bar\sigma$ has the following fibers:
\begin{itemize}
\item $\bar\sigma^{-1}(a^{i})=\{a^{i-1}\}$ for $i\in\{2,3,4\}$;
\item $\bar\sigma^{-1}(a^5)=\{a^4,a^5,\Lambda^5,\Lambda^4,\diamondsuit^5_{234}, \diamondsuit^5_{134},\diamondsuit^5_{124},\diamondsuit^4_{235},
\diamondsuit^4_{135}, \diamondsuit^4_{125},\Lambda^5_{34},\Lambda^5_{24},\Lambda^5_{14},\Lambda^4_{35}, \Lambda^4_{25},\Lambda^4_{15},
\triangle_{345},\triangle_{245}, \triangle_{145}, \Theta_{45}\}$;
\item $\bar\sigma^{-1}(\triangle_{234})=\{\triangle_{123}\}$;
\item $\bar\sigma^{-1}(\triangle_{235})=\{\Theta_{12},\diamondsuit^2_{145}, \diamondsuit^1_{245},\Lambda^2_{15},\Lambda^2_{14},\Lambda^1_{25},\Lambda^1_{24}, \triangle_{125},\triangle_{124}\}$;
\item $\bar\sigma^{-1}(\triangle_{245})=\{\Theta_{13},\diamondsuit^3_{145}, \diamondsuit^1_{345},\Lambda^3_{15},\Lambda^3_{14},\Lambda^1_{35},\Lambda^1_{34}, \triangle_{135},\triangle_{134}\}$;
\item $\bar\sigma^{-1}(\triangle_{345})=\{\Theta_{23},\diamondsuit^3_{245}, \diamondsuit^2_{345},\Lambda^3_{25},\Lambda^3_{24},\Lambda^2_{35},\Lambda^2_{34}, \triangle_{234},\triangle_{235}\}$;
\item $\bar\sigma^{-1}(\diamondsuit^2_{345})=\{\Lambda^1,\diamondsuit^1_{235}, \diamondsuit^1_{234},\Lambda^1_{23}\}$;
\item $\bar\sigma^{-1}(\diamondsuit^3_{245})=\{\Lambda^2,\diamondsuit^2_{135}, \diamondsuit^2_{134},\Lambda^2_{13}\}$;
\item $\bar\sigma^{-1}(\diamondsuit^4_{235})=\{\Lambda^3,\diamondsuit^3_{125}, \diamondsuit^3_{124},\Lambda^3_{12}\}$;
\item $\bar\sigma^{-1}(\diamondsuit^5_{234})=\{\bigcirc,\Theta_{35}, \Theta_{34},\Theta_{25},\Theta_{24},\Theta_{15},\Theta_{14},
\diamondsuit^5_{123}, \diamondsuit^4_{123},\Lambda^5_{23},\Lambda^5_{13},\Lambda^5_{12},\Lambda^4_{23},\Lambda^4_{13}, \Lambda^4_{12},\Lambda^3_{45},
\Lambda^2_{45}, \Lambda^1_{45}\}$.
\end{itemize}

To recognize the algebraic structure of the automorphism group $\Aut(\lambda(\M_{5,1}))$, consider the restriction operator
$R:\Aut(\lambda(\M_{5,1}))\to\Aut(\lambda(\M_{5,1}^{\cdot2}))$. By Corollary~\ref{c:Ker}, the kernel of this operator is isomorphic to
$$\prod_{\mathcal L\in\lambda(\M_{5,1}^{\cdot2})}S_{\bar\sigma^{-1}(\mathcal L)\setminus \lambda(\M_{5,1}^{\cdot2})}\cong
S_{14}\times S_9^2\times S_5\times S_4^3\times S_{18}.$$

We claim that the operator $R$ has trivial range. Given any automorphism $\psi$ of $\lambda(\M_{5,1})$, we should prove that $\psi(\A)=\A$ for any $\A\in\lambda(\M_{5,1}^{\cdot2})$. By Proposition~\ref{mono1}, $\psi(a^i)=a^i$ for all $i\in\{1,2,3,4,5\}$. The equality $\psi(a)=a$ implies that $\bar\sigma\circ\psi=\psi\circ\bar\sigma$. Consequently, for every $\mathcal L\in \lambda(\M_{5,1}^{\cdot2})$ we have $\psi(\bar\sigma^{-1}(\mathcal L))=\bar\sigma^{-1}(\psi(\mathcal L))$. Since the ideals $\lambda(\M_{5,1}^{\cdot k})$ are characteristic in $\lambda(\M_{5,1})$, Corollary~\ref{c:Ker} ensures that
   $$|\bar\sigma^{-1}(\psi(\mathcal L))\cap\lambda(\M_{5,1}^{\cdot k})|=|\bar\sigma^{-1}(\mathcal L)\cap\lambda(\M_{5,1}^{\cdot k})|$$ for all
$\mathcal L\in\lambda(\M_{5,1}^{\cdot 2})$ and all $k\in\{1,2,3,4,5\}$.

Comparing the cardinalities of the sets $\bar\sigma^{-1}(\mathcal L)\cap\lambda(\M_{5,1}^{\cdot k})$ for various $k\in\{1,2,3,4,5\}$ and $\mathcal L\in\lambda(\M_{5,1}^{\cdot 2})$, we see that $\psi(F_2)=F_2$, $\psi(F_3)=F_3$ and $\psi(\A)=\A$ for all $\A\in\M_{5,1}^{\cdot2}\setminus (F_2\cup F_3)$, where  $F_2=\{\triangle_{235},\triangle_{245}\}$ and $F_3=\{\diamondsuit^2_{345},\diamondsuit^3_{245},\diamondsuit^4_{235}\}$.

Let us check that $\psi(\triangle_{235})=\triangle_{235}$. To derive a contradiction, assume that $\psi(\triangle_{235})\ne\triangle_{235}$ and hence $\psi(\triangle_{235})=\triangle_{245}$. Consider the element $\triangle_{124}\in \bar\sigma^{-1}(\triangle_{235})\setminus\lambda(\M_{5,1}^{\cdot2})$ and observe that $\psi(\triangle_{124})\in\psi(\bar\sigma^{-1}(\triangle_{235}))= \bar\sigma^{-1}(\psi(\triangle_{235}))= \bar\sigma^{-1}(\triangle_{245})$.
Let $\pi:\lambda(\M_{5,1})\to\lambda(\M_{5,1})$ be a permutation such that $\pi(\psi(\triangle_{124}))=\triangle_{135}$ and $\pi(\A)=\A$ for all $\A\in\lambda(\M_{5,1})\setminus\{\triangle_{135},\triangle_{124}\}$.
By Corollary~\ref{c:Ker}, the permutation $\pi$ belongs to the automorphism group $\Aut(\lambda(\M_{5,1}))$. Replacing $\psi$ by $\pi\circ\psi$, we can assume that $\psi(\triangle_{124})=\triangle_{135}$.

Observe that $$a^5=\triangle_{135}*\triangle_{123}=
\psi(\triangle_{124})*\psi(\triangle_{123})=\psi(\triangle_{124}*\triangle_{123})=
\psi(\triangle_{345})\ne a^5.$$
This contradiction shows that $\psi(\triangle_{235})=\triangle_{235}$ and hence $\psi(\triangle_{245})=\triangle_{245}$ (as $\psi(F_2)=F_2)$).

Next, we show that $\psi(\A)=\A$ for any $\A\in F_3=\{\diamondsuit^2_{345},\diamondsuit^3_{245},\diamondsuit^4_{235}\}$.
To simplify notations, put $\diamondsuit^2:=\diamondsuit^2_{345}$,
$\diamondsuit^3:=\diamondsuit^3_{245}$, $\diamondsuit^4:=\diamondsuit^4_{235}$.
To derive a contradiction, assume that $\psi(\diamondsuit^i)=\diamondsuit^j$ for some $2\le i<j\le 4$.
Consider the maximal linked upfamily $\Lambda^{i-1}\in \bar\sigma^{-1}(\diamondsuit^i)$ and
observe that $\psi(\Lambda^{i-1})\in\psi(\bar\sigma^{-1}(\diamondsuit^{i}))=
\bar\sigma^{-1}(\psi(\diamondsuit^i))=\bar\sigma^{-1}(\diamondsuit^{j})$.
Since $\Lambda^{j-1}\in \bar\sigma^{-1}(\diamondsuit^{j})$, we can replace $\psi$ by the
composition with the permutation exchanging $\psi(\Lambda^{i-1})$ with $\Lambda^{j-1}$
and $\psi(\triangle_{134})$ with $\triangle_{134}$, and assume that $\psi(\Lambda^{i-1})=\Lambda^{j-1}$ and $\psi(\triangle_{134})=\triangle_{134}$.

 Now observe that the elements $\Lambda^1,\Lambda^2,\Lambda^3$ can be algebraically distinguished by the equalities:
$$
\begin{gathered}
\Lambda^1*\triangle_{123}=\triangle_{345},\;\;\Lambda^1*\triangle_{134}=\triangle_{345}\\
\Lambda^2*\triangle_{123}=\triangle_{345},\;\;\Lambda^2*\triangle_{134}=a^5\\
\Lambda^3*\triangle_{123}=a^5,\;\;\Lambda^3*\triangle_{134}=a^5.
\end{gathered}
$$
Two cases are possible. If $i=2$, then $\psi(\Lambda^1)=\Lambda^{j-1}$ and we obtain a contradiction: $$a^5=\psi(a^5)=\Lambda^{j-1}*\triangle_{134}=\psi(\Lambda^1)*\psi(\triangle_{134})=
\psi(\Lambda^1*\triangle_{134})=\psi(\triangle_{345})\ne a^5.$$
If $i=3$, then we obtain a contradiction considering
$$a^5=\psi(a^5)=\Lambda^{j-1}*\triangle_{123}=\psi(\Lambda^2)*\psi(\triangle_{123})=
\psi(\Lambda^2*\triangle_{123})=\psi(\triangle_{345})\ne a^5.$$
In both cases we obtain a contradiction with the assumption that $\psi(\A)\ne\A$ for some $\A\in F_3$. This contradiction completes the proof of the triviality of the range of the operator $R$. Then the group $\Aut(\lambda(\M_{5,1}))$ is equal to the kernel of the operator $R$ and hence is isomorphic to the group
$
S_4^3\times S_5\times S_9^2\times S_{14}\times S_{18}$.
\newpage

\section{Summary Table and some Conjectures}\label{s:table}

The obtained results on the automorphism groups of superextensions of monogenic semigroups of cardinality $\le 5$ are summed up in the following table.

\begin{center}
\begin{tabular}{|c|c|c|c|c|c|}
\hline
$S$ & $|S|=$& $S\cong$& $\Aut(S)\cong$&$\Aut(\lambda(S))\cong$&$|\Aut(\lambda(S))|=$\\
\hline
$\M_{1,1}$ &1&$C_1$&$C_1$&$C_1$&1\\
\hline
$\M_{1,2}$ &2&$C_2$&$C_1$&$C_1$&1\\
\hline
$\M_{2,1}$ &2&  &$C_1$&$C_1$&1\\
$\M_{2,1}^{\cdot2}$ &1& $C_1$&$C_1$&$C_1$&1\\
\hline
$\M_{1,3}$ &3& $C_3$&$C_2$&$C_2$&2\\
\hline
$\M_{2,2}$ &3& &$C_1$&$C_2$&2\\
$\M_{2,2}^{\cdot2}$ &2& $C_2$&$C_1$&$C_1$&1\\
\hline
$\M_{3,1}$ &3& &$C_1$&$C_1$&1\\
$\M_{3,1}^{\cdot2}$ &2& $\M_{2,1}$&$C_1$&$C_1$&1\\
$\M_{3,1}^{\cdot3}$ &1& $C_1$&$C_1$&$C_1$&1\\
\hline
$\M_{1,4}$ &4& $C_4$&$C_2$&$C_2^2$&4\\
\hline
$\M_{2,3}$ &4& &$C_1$&$S_3\times S_5$&720\\
$\M_{2,3}^{\cdot2}$ &3& $C_3$&$C_2$&$C_2$&2\\
\hline
$\M_{3,2}$ &4& &$C_1$&$S_3\times S_4$&144\\
$\M_{3,2}^{\cdot2}$ &3& $\M_{2,2}$&$C_1$&$C_2$&2\\
$\M_{3,2}^{\cdot3}$ &2& $C_2$&$C_1$&$C_1$&1\\
\hline
$\M_{4,1}$ &4& &$C_1$&$S_3\times S_4$&144\\
$\M_{4,1}^{\cdot2}$ &3&  &$C_2$&$S_3$&6\\
$\M_{4,1}^{\cdot3}$ &2& $\M_{2,1}$&$C_1$&$C_1$&1\\
$\M_{4,1}^{\cdot4}$ &1& $C_1$&$C_1$&$C_1$&1\\
\hline
$\M_{1,5}$ &5& $C_5$&$C_4$&$C_4$&4\\
\hline
$\M_{2,4}$ &5& &$C_1$&$S_3^3{\times} S_8^3{\times} S_{17}{\times} S_{19}$&4\,465\,152\\
$\M_{2,4}^{\cdot2}$ &4& $C_4$&$C_2$&$C_2^2$&4\\
\hline
$\M_{3,3}$ &5& &$C_1$&$S_4^3{\times} S_{5}{\times} S_9^2{\times} S_{14}{\times} S_{18}$&6\,531\,840\\
$\M_{3,3}^{\cdot2}$ &4& $\M_{2,3}$ &$C_1$&$S_3\times S_5$&720\\
$\M_{3,3}^{\cdot3}$ &3&$C_3$&$C_2$&$C_2$&2\\
\hline
$\M_{4,2}$ &5& &$C_1$&$S_4^3{\times} S_5{\times} S_9^2{\times} S_{14}{\times} S_{18}$&6\,531\,840\\
$\M_{4,2}^{\cdot2}$ &4& & $C_2$&$S_5\times S_5$&14400\\
$\M_{4,2}^{\cdot3}$ &3& & $C_1$&$C_2$&2\\
$\M_{4,2}^{\cdot4}$ &2&$C_2$&$C_1$&$C_1$&1\\
\hline
$\M_{5,1}$ &5& &$C_1$&$S_4^3{\times} S_5{\times} S_9^2{\times} S_{14}{\times} S_{18}$&6\,531\,840\\
$\M_{5,1}^{\cdot2}$ &4&  &$C_1$&$S_9$&362\,880\\
$\M_{5,1}^{\cdot3}$ &3& $\M_{4,1}^{\cdot2}$&$C_2$&$S_3$&6\\
$\M_{5,1}^{\cdot4}$ &2& $\M_{2,1}$&$C_1$&$C_1$&1\\
$\M_{5,1}^{\cdot5}$ &1& $C_1$&$C_1$&$C_1$&1\\
\hline
\end{tabular}
\end{center}
\bigskip

Analyzing the entries of this table and the arguments in Section~\ref{s4}, we can make the following conjectures.

\begin{conjecture} For any integer numbers $r,s\ge 3$ and $n,m\ge 1$ with $r+m=s+n$ the automorphism groups $\Aut(\lambda(\M_{r,m}))$ and $\Aut(\lambda(\M_{s,n}))$ are isomorphic.
\end{conjecture}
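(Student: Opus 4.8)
The plan is to combine the rigidity supplied by Proposition~\ref{mono1} with a uniform analysis of the auto-shift $\bar\sigma:\lambda(\M_{r,m})\to\lambda(\M_{r,m}^{\cdot2})$, $\bar\sigma:\A\mapsto a*\A$, from Lemma~\ref{l:auto-shift}, reducing the statement to a purely combinatorial invariance. Write $N:=r+m-1=|\M_{r,m}|$, so that the hypothesis $r+m=s+n$ says exactly that the two semigroups have the same cardinality $N$. The backbone is that $\bar\sigma$ coincides with the functorial map $\lambda\sigma$ induced by the shift $\sigma:x\mapsto ax$ of $\M_{r,m}$ (this identification $\lambda\sigma(\A)=a*\A$ is the one already used in Lemma~\ref{l:char-i}), and that on the embedded copy of $\lambda(\M_{r,m}^{\cdot2})$ the map $\bar\sigma$ acts as the good shift $\bar\sigma_2$ of Corollary~\ref{c:good}. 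Thus the whole analysis lives inside the tower of characteristic ideals $\lambda(\M_{r,m})\supset\lambda(\M_{r,m}^{\cdot2})\supset\lambda(\M_{r,m}^{\cdot3})\supset\cdots\supset\lambda(C_m)$ of Lemma~\ref{l:char-i}, and I would split the proof into two pillars.

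\emph{Pillar (A): rigidity.} I would show that for every $r\ge3$ the restriction operator $R:\Aut(\lambda(\M_{r,m}))\to\Aut(\lambda(\M_{r,m}^{\cdot2}))$ of Corollary~\ref{c:Ker} has trivial range, so that $\Aut(\lambda(\M_{r,m}))\cong\prod_{\mathcal L\in\lambda(\M_{r,m}^{\cdot2})}S_{\bar\sigma^{-1}(\mathcal L)\setminus\lambda(\M_{r,m}^{\cdot2})}$. By Proposition~\ref{mono1} every automorphism $\psi$ fixes each principal ultrafilter $a^i$, and by the $H$-condition of Corollary~\ref{c:Ker} the restriction $\varphi=R(\psi)$ preserves the numbers $|\bar\sigma^{-1}(\mathcal L)\cap\lambda(\M_{r,m}^{\cdot k})|$ for every level $k$ of the tower. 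The idea is that this fibre signature along the tower separates all elements of $\lambda(\M_{r,m}^{\cdot2})$ except for a few highly symmetric families (the $\diamondsuit$-type families of Section~\ref{s4}), and that the surviving ambiguities are destroyed by testing products that fall into the minimal ideal $C_m$, exactly as in the case analyses for $\M_{3,3}$, $\M_{4,2}$ and $\M_{5,1}$. This pillar is a uniform reformulation of those arguments and I expect it to be laborious but routine.

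\emph{Pillar (B): combinatorial invariance.} Here lies the real content: I would prove that the multiset $\{\,|\bar\sigma^{-1}(\mathcal L)\setminus\lambda(\M_{r,m}^{\cdot2})|:\mathcal L\in\lambda(\M_{r,m}^{\cdot2})\,\}$ depends only on $N$. The first half is clean: as a surjection onto its image, $\sigma$ is the \emph{unique} surjection of an $N$-set onto an $(N-1)$-set with exactly one two-point fibre, namely $\{a^{r-1},a^{r+m-1}\}\mapsto a^r$; any two such are isomorphic, hence the maps $\lambda\sigma$ are isomorphic and the multiset of full fibre sizes $\{|\bar\sigma^{-1}(\mathcal L)|\}$ depends only on $N$. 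The second half uses that, for $r\ge3$, the shift restricted to $\M_{r,m}^{\cdot2}$ is again a one-merge surjection onto $\M_{r,m}^{\cdot3}$ \emph{with the same merge point} $a^r$ (here $a^{r-1}\in\M_{r,m}^{\cdot2}$ requires $r\ge3$). Since $\bar\sigma^{-1}(\mathcal L)\cap\lambda(\M_{r,m}^{\cdot2})=\bar\sigma_2^{-1}(\mathcal L)$, the identity $|\bar\sigma^{-1}(\mathcal L)\setminus\lambda(\M_{r,m}^{\cdot2})|=|\bar\sigma^{-1}(\mathcal L)|-|\bar\sigma_2^{-1}(\mathcal L)|$ expresses each new-fibre size as a difference of two counts, both functions of $N$ and of the \emph{local type} of $\mathcal L$ at the common merge point $a^r$; the joint distribution of the pair over $\mathcal L$, and hence the multiset of differences, should therefore depend only on $N$. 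I have verified by hand that this stable-merge-point mechanism reproduces the coincidences $S_3\times S_4$ for $\M_{3,2},\M_{4,1}$ and $S_4^3\times S_5\times S_9^2\times S_{14}\times S_{18}$ for $\M_{3,3},\M_{4,2},\M_{5,1}$ recorded in Section~\ref{s4}.

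The main obstacle will be making Pillar~(B) rigorous: one needs a genuine lemma describing the fibres of $\lambda$ applied to a point-merging surjection — equivalently, how a maximal linked upfamily (a self-dual monotone Boolean function) transforms when a single point of the ground set is split in two — and showing that the relevant count is a function of $N$ and of purely local data at the merge point. The delicate issue is uniformity across the tower, and especially the boundary case $r=3$, where the lower merge source $a^{r-1}=a^2$ is itself the bottom element deleted when passing from $\M_{r,m}^{\cdot2}$ to $\M_{r,m}^{\cdot3}$; the hand computations show the resulting multiset is nonetheless unchanged, so a position-independent proof of this coincidence is precisely the crux that must be supplied.
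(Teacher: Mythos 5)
The statement you are proving is labelled \emph{Conjecture} in the paper: the authors offer no proof of it (it is extracted from the empirical coincidences in the summary table), so there is no argument of theirs to compare yours against. Judged on its own, your proposal is a reasonable two-part strategy but not a proof, and the two pillars are not equally close to completion. Pillar~(B) is essentially sound and, in my view, easier than you fear: since $\bar\sigma=\lambda\sigma$ and $\bar\sigma_2=\lambda(\sigma|\M_{r,m}^{\cdot2})$, functoriality gives that $|\bar\sigma^{-1}(\mathcal L)|$ depends only on the isomorphism type of the pointed family $(\mathcal L,a^r)$ and on $N$, and likewise for $|\bar\sigma_2^{-1}(\mathcal L)|$; because both surjections merge a single pair of points \emph{over the same image point} $a^r$, and because for $r\ge 3$ the point $a^2$ deleted in passing from $\M_{r,m}^{\cdot2}$ to $\M_{r,m}^{\cdot3}$ is distinct from $a^r$, the joint distribution of the pair of counts over $\mathcal L$ is governed by configurations $(|Y|,\,p,\,q)$ with $p\ne q$ that are all conjugate under bijections. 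In particular your worry about $r=3$ (where $a^{r-1}=a^2$) is harmless: the count of preimages under a one-doubleton-fibre surjection does not see \emph{which} two points form the doubleton, only the image point $p$ relative to $\mathcal L$. So no explicit ``splitting lemma'' for self-dual monotone Boolean functions is needed; pure symmetry suffices.

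The genuine gap is Pillar~(A), which you dismiss as ``laborious but routine.'' The assertion that $R:\Aut(\lambda(\M_{r,m}))\to\Aut(\lambda(\M_{r,m}^{\cdot2}))$ has trivial range for all $r\ge3$ is precisely (the $r\ge3$ part of) the paper's second conjecture, and the paper establishes it only for $|\M_{r,m}|\le5$ by ad hoc computations: in each of the cases $\M_{3,3}$, $\M_{4,2}$, $\M_{5,1}$ the fibre-cardinality invariants of Corollary~\ref{c:Ker} leave residual ambiguities (among the $\diamondsuit$- and $\triangle$-type families), which are then killed by exhibiting specific products such as $\triangle_{123}*\Lambda^1\ne\triangle_{123}*\Lambda^4$ landing in the minimal ideal. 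You give no mechanism guaranteeing that, for general $N$ (where $|\lambda(\M_{r,m}^{\cdot2})|$ grows doubly exponentially), the fibre signatures along the tower plus finitely many such product tests always force $R(\psi)=\mathrm{id}$; nothing in Proposition~\ref{mono1} or Corollary~\ref{c:Ker} rules out a nontrivial automorphism of $\lambda(\M_{r,m}^{\cdot2})$ that preserves every fibre-cardinality invariant. Until that is supplied, the proposal establishes at most the conditional statement ``if both restriction operators have trivial range, then the two automorphism groups are isomorphic,'' which is a genuine reduction but leaves the conjecture open.
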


\begin{conjecture} For any integer $r\ge 2$ and $m\ge 1$ the restriction operator $R:\Aut(\lambda(\M_{r,m}))\to\Aut(\lambda(\M_{r,m}^{\cdot2}))$, $R:\psi\mapsto \psi|\lambda(\M_{r,m}^{\cdot2})$,
has trivial range.
\end{conjecture}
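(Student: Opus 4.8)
The plan is to prove that every automorphism $\psi\in\Aut(\lambda(\M_{r,m}))$ restricts to the identity on the characteristic ideal $\lambda(\M_{r,m}^{\cdot2})$, following the template that succeeds in every case treated in Section~\ref{s4}. First I would record the two structural facts that drive the argument. By Lemma~\ref{l:auto-shift} the map $\bar\sigma:\A\mapsto a*\A$ is an auto-shift, so $\bar\sigma\circ\psi=\psi\circ\bar\sigma$ and hence $\psi\big(\bar\sigma^{-1}(\mathcal L)\big)=\bar\sigma^{-1}(\psi(\mathcal L))$ for every $\mathcal L\in\lambda(\M_{r,m}^{\cdot2})$. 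By Corollary~\ref{c:Ker} (through Theorem~\ref{t:Ker}) this forces, for every characteristic set $C\in\Xi_{\lambda(\M_{r,m})}$, the numerical invariants
$$|\bar\sigma^{-1}(\psi(\mathcal L))\cap C|=|\bar\sigma^{-1}(\mathcal L)\cap C|.$$
The characteristic ideals $\lambda(\M_{r,m}^{\cdot k})$ produced by Lemma~\ref{l:char-i}, together with the idempotent set $E(\lambda(\M_{r,m}))$ and the complements $\lambda(\M_{r,m}^{\cdot k})\setminus\lambda(\M_{r,m}^{\cdot(k+1)})$, supply a large reservoir of such sets $C$.

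Next I would pin down the action of $\psi$ layer by layer along the descending filtration $\lambda(\M_{r,m}^{\cdot2})\supset\lambda(\M_{r,m}^{\cdot3})\supset\cdots\supset\lambda(C_m)$ of characteristic ideals. The innermost layer is handled first: by Proposition~\ref{mono1} (for $r\ge3$) or Theorem~\ref{t:r=2} (for $r=2$) the principal ultrafilters $a^i$ are fixed, and combining the characteristicity of $\lambda(C_m)=\bigcap_k\lambda(\M_{r,m}^{\cdot k})$, the description of $\Aut(\lambda(C_m))$ from \cite{BG9}, and the fiber-cardinality invariants above (as carried out for $\lambda(C_4)$ in Subsection~\ref{ss:M24}) one shows that $\psi$ fixes $\lambda(C_m)$ pointwise. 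I would then induct on the decreasing exponent $k$: assuming $\psi$ is already the identity on $\lambda(\M_{r,m}^{\cdot(k+1)})$, the vector of invariants $\big(|\bar\sigma^{-1}(\mathcal L)\cap\lambda(\M_{r,m}^{\cdot j})|\big)_{j}$ separates almost all elements $\mathcal L$ of the shell $\lambda(\M_{r,m}^{\cdot k})\setminus\lambda(\M_{r,m}^{\cdot(k+1)})$, exactly as the fiber tables from Subsection~\ref{ss:M24} onward do in the cases $|\M_{r,m}|\le5$.

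The residual step, which carries the real content, is to eliminate the few elements of each shell that the cardinality invariants fail to distinguish (the ambiguous pairs $\{\triangle_{\cdots},\triangle_{\cdots}\}$ and $\{\diamondsuit^i,\diamondsuit^j\}$ encountered repeatedly in Section~\ref{s4}). Here I would mimic the multiplicative arguments used there: after normalizing $\psi$ by composing it with a permutation from $\mathrm{Ker}(R)$, exhibit for any two surviving candidates $\mathcal L\ne\mathcal L'$ an upfamily $\mathcal B$ such that $\mathcal L*\mathcal B$ and $\mathcal L'*\mathcal B$ lie in distinct already-fixed elements; since $\psi$ is a homomorphism this rules out $\psi(\mathcal L)=\mathcal L'$ and forces $\psi(\mathcal L)=\mathcal L$. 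Granting this for every shell, $R(\psi)=\mathrm{id}$ and the range of $R$ is trivial.

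The hard part will be making this residual separation \emph{uniform} in $r$ and $m$. In the finite cases one simply reads a separating product off an explicit multiplication table, but for general parameters one needs a structural proof that any two distinct maximal linked upfamilies on $\M_{r,m}^{\cdot2}$ agreeing on every characteristic-set fiber-invariant can nonetheless be told apart by left or right multiplication. This in turn requires controlling both the family $\Xi_{\lambda(\M_{r,m})}$ of characteristic subsets and the operation on $\lambda(\M_{r,m}^{\cdot2})$, whose size grows double-exponentially in $|\M_{r,m}|$. Taming this growth --- perhaps by reducing through the retraction $\bar\rho$ onto $\lambda(C_m)$ of Theorem~\ref{t:r=2} and exploiting the fact that the shells become monogenic when $\gcd(2,m)=1$ --- is precisely the obstacle that currently keeps the statement at the level of a conjecture.
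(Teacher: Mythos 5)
The statement you are asked to prove is stated in the paper only as a \emph{conjecture}: the authors verify it case by case for all monogenic semigroups of cardinality at most $5$ in Section~\ref{s4}, but give no general proof, and your proposal does not supply one either. Your outline faithfully reproduces the template that the paper uses in each finite case --- the auto-shift $\bar\sigma$ from Lemma~\ref{l:auto-shift}, the fiber-cardinality invariants from Corollary~\ref{c:Ker} relative to the characteristic ideals $\lambda(\M_{r,m}^{\cdot k})$, and a final multiplicative separation of the few ambiguous elements after normalizing by an element of $\mathrm{Ker}(R)$. But the two load-bearing steps are asserted, not proved, for general $r$ and $m$: (i) that the vector of invariants $\big(|\bar\sigma^{-1}(\mathcal L)\cap\lambda(\M_{r,m}^{\cdot j})|\big)_j$ separates ``almost all'' elements of each shell, and (ii) that the residual ambiguous elements can always be separated by multiplication against already-fixed elements. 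In the paper both facts are read off from explicitly computed fiber tables and multiplication tables for $|\M_{r,m}|\le 5$; nothing in the paper, and nothing in your proposal, shows they persist as $|\lambda(\M_{r,m})|$ grows double-exponentially. You acknowledge this yourself in your final paragraph, which is to your credit, but it means the proposal is a research plan rather than a proof.

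To be concrete about where the argument would first break down: even step (i) is unestablished. In the worked examples the invariants leave behind ambiguous pairs such as $\{\triangle_{234},\triangle_{245}\}$ or $\{\diamondsuit^3_{245},\diamondsuit^5_{234}\}$, and there is no a priori bound on how large the ambiguous classes become for larger $m$; if they grow, the ad hoc products used in Section~\ref{s4} (e.g.\ $\triangle_{123}*\Lambda^1\ne\triangle_{123}*\Lambda^4$) give no systematic recipe for separating them. A genuine proof would need either a structural characterization of the fibers of $\bar\sigma$ over $\lambda(\M_{r,m}^{\cdot 2})$ valid for all $r,m$, or a different idea altogether (for instance, exploiting the retraction $\bar\rho$ onto $\lambda(C_m)$ and the known rigidity of $\Aut(\lambda(C_m))$ more directly, as you hint). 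As it stands, the statement remains open, and your proposal should be presented as a strategy consistent with the evidence of Section~\ref{s4}, not as a proof.
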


\end{document}